\newtheorem{theorem}{Theorem}[section]
\newtheorem{lemma}[theorem]{Lemma}
\theoremstyle{definition}
\newtheorem{example}[theorem]{Example}
\theoremstyle{remark}
\newtheorem{remark}[theorem]{Remark}
 \newcommand{\CC}{\mathbb{C}}
\numberwithin{equation}{section}
\begin{document}

\title{Remarks on 2-dimensional HQFT's }

\author{Mihai D. Staic}
\thanks{The work of M.\ Staic was  partially supported by the
CNCSIS project ''Hopf algebras, cyclic homology and monoidal
categories'' contract nr. 560/2009. The work of V.\ Turaev was
partially supported by the NSF
  grants DMS-0707078 and DMS-0904262. }
\address{Department of Mathematics, Indiana University, Rawles Hall, Bloomington, IN 47405, USA}
\address{Institute of Mathematics of the Romanian Academy, PO.BOX 1-764, RO-70700, Bu\-cha\-rest, Romania}

\email{mstaic@indiana.edu}


\author{Vladimir Turaev}
\address{Department of Mathematics, Indiana University, Rawles Hall, Bloomington, IN 47405, USA }
\email{vtouraev@indiana.edu}

\date{January 1, 1994 and, in revised form, June 22, 1994.}


\keywords{Topological Quantum Field Theory, Frobenius algebra, $k$-invariant}

 \begin{abstract}
 We introduce and study algebraic structures
 underlying  2-dimen\-si\-onal Homotopy Quantum Field Theories (HQFTs)
 with   arbitrary target spaces. These algebraic structures are formalized in the notion of a twisted Frobenius algebra. Our work
 generalizes
results of Brightwell,  Turner, and the second author on
2-dimen\-si\-onal HQFTs with simply-connected or aspherical targets.
 \end{abstract}

\maketitle

%


\section*{Introduction}

A   fruitful idea  in topology is to construct invariants of
manifolds that behave functorially with respect to   gluings of
manifolds along   the  boundary. More formally, one associates to
every closed oriented $d$-dimensional manifold~$M$ a finite
dimensional vector space ${V}_M$ and to every compact oriented
$(d+1)$-dimensional cobordism $(W,M,N)$ a homomorphism $\tau_W:{V}_M
\to {V}_N$. The conditions satisfied by $({V},\tau)$ in order to
obtain a consistent theory are formalized in the notion of a
$(d+1)$-dimensional Topological Quantum Field Theory (TQFT),  see
\cite{am}. The second author introduced more general   Homotopy
Quantum Field Theories (HQFT's), see \cite{tu}. One can think of a
$(d+1)$-dimensional HQFT as of a TQFT for $d$-dimensional manifolds
and $(d+1)$-dimensional cobordisms endowed with maps to a fixed
space~$X$.

The 1-dimensional HQFTs with target $X$ are easily classified in
terms  of finite dimensional representations of $\pi_1(X)$. A study
of 2-dimensional HQFTs is more involved and leads to interesting
algebra. For contractible $X$,  the category of 2-dimensional HQFTs
with target   $X$ is equivalent to the category of  2-dimensional
TQFTs and  is known to be equivalent to the category of commutative
finite-dimensional  Frobenius algebras. If $X=K(G,1)$ is an
Eilenberg-MacLane space determined by a group $G$, then   the
category of 2-dimensional HQFTs with target   $X$ is equivalent to
the category of so-called crossed Frobenius $G$-algebras, see
\cite{tu}. If $X=K(A,2)$ is an Eilenberg-MacLane space determined by
an abelian group $A$, then the category of 2-dimensional HQFTs with
target   $X$ is equivalent to the category of Frobenius
$A$-algebras, see \cite{bt}.

In this paper we address the case where both groups $G=\pi_1(X)$ and
$A=\pi_2(X)$ are allowed to be  nontrivial. An important role   will
be  played by  the first $k$-invariant $k  \in H^3(G, A)$ of $X$. We
shall introduce and study algebraic structures
 underlying  2-dimensional HQFTs with target $X$. We call them twisted Frobenius algebras or, shorter,
  TF-algebras over  $(G, A, k)$.
 Briefly speaking, a TF-algebra is   a $G$-graded algebra of
 $A$-modules
which  is associative   up to a 3-cocycle representing $k$ and which
satisfies a form of commutativity. This work  is a    step towards
   classification of 2-dimensional HQFTs with target $X$
(for a different approach, see~\cite{pt}).

The paper is organized as follows.  We introduce TF-algebras in
Section~\ref{frof}. In Section~2 we recall the notion of an HQFT and
the definition of the first $k$-invariant of a topological space.
In Section~3   we derive from any 2-dimensional HQFT the
  underlying  TF-algebra.   In Section~4 we compute the underlying TF-algebras of  the
   cohomological 2-dimensional HQFTs.

Throughout the paper the symbol $K$ denotes a field and $\otimes=\otimes_K$.

\section{Twisted Frobenius Algebras}\label{frof}

\subsection{Preliminaries}\label{preli}
In this section   $G$ is a group with neutral element $\varepsilon$
and  $A$ is a left $G$-module with neutral element $1=1_A$.  We
use multiplicative notation for the group operation in $A$.  As
usual, the group ring of $A$ with coefficients in $K$ is denoted
$K[A]$. The action of $\alpha \in G$ on $a\in A$ is denoted by
$\,^\alpha a$.

To calculate the cohomology   $H^*(G,A)$, we use  the standard
cochain complex
$$C^0(G,A)\stackrel{\delta^0}{\to} C^1(G,A) \stackrel{\delta^1}{\to} C^2(G,A)\stackrel{\delta^2}{\to} C^3(G,A)
\stackrel{\delta^3}{\to}  C^4(G,A)\to  \cdots $$
where   $C^n(G,A)=Map(G^n,A) $ for any $n\geq 0$. For small $n$, the
coboundary homomorphism  $\delta^n:C^n(G,A)\to C^{n+1}(G,A)$  is
given by the following formulas:
\begin{eqnarray*}
\delta^0(a)(\alpha )=\,^\alpha a \, a^{-1}
\end{eqnarray*}
for any $a\in A=C^0(G, A)$ and $\alpha\in G$,
\begin{eqnarray*}
\delta^1(\psi)(\alpha,\beta)=\,^\alpha\psi(\beta)\, \psi(\alpha\beta)^{-1}\psi(\alpha)
\end{eqnarray*}
for any $\psi\in  C^1(G, A)$ and $\alpha, \beta\in G$,
\begin{eqnarray*}
\delta^2(\omega)(\alpha,\beta,\gamma)=\,^\alpha\omega(\beta,\gamma)\, \omega(\alpha\beta,\gamma)^{-1}\omega(\alpha,\beta\gamma)\, \omega(\alpha,\beta)^{-1}
\end{eqnarray*}
for any $\omega\in  C^2(G, A)$ and $\alpha, \beta, \gamma\in G$,
\begin{eqnarray*}
\delta^3(\chi)(\alpha,\beta,\gamma,\rho)=\,^\alpha\chi(\beta,\gamma,\rho)\, \chi(\alpha\beta,\gamma,\rho)^{-1}\chi(\alpha,\beta\gamma,\rho)\, \chi(\alpha,\beta,\gamma\rho)^{-1}\chi(\alpha,\beta,\gamma)
\end{eqnarray*}
for any $\chi\in  C^3(G, A)$ and $\alpha, \beta, \gamma, \rho\in G$.

\subsection{Definition of TF-algebras}  Fix from now on a normalized 3-cocycle  $\kappa:G^3\to A$. Thus,
 for all $\alpha, \beta, \gamma, \rho\in G$,
\begin{eqnarray*}
\,^\alpha\kappa(\beta,\gamma,\rho)\,\kappa(\alpha\beta,\gamma,\rho)^{-1}\,\kappa(\alpha,\beta\gamma,\rho)\,\kappa(\alpha,\beta,\gamma\rho)^{-1}\,\kappa(\alpha,\beta,\gamma)=1.
\end{eqnarray*}
The word \lq\lq normalized" means  that for all $\alpha, \beta \in
G$,
\begin{eqnarray*}
\kappa(\varepsilon,\alpha,\beta)=\kappa(\alpha,\varepsilon,\beta )=\kappa(\alpha,\beta, \varepsilon)=1.
\end{eqnarray*}

A {\it twisted Frobenius algebra (TF-algebra)} over the triple
$(G,A,\kappa)$ is a $G$-graded $K[A]$-module $V=\oplus_{\alpha \in
G} V_{\alpha}$ such that

1) ({\it Underlying module and action of $A$.}) The underlying
$K$-vector space  of $V_{\alpha}$ is    finite-dimensional  and for
all $\alpha \in G$, $u\in V_{\alpha}$, and $a\in A$,
\begin{eqnarray}
au=\,^{\alpha}au . \label{bimodule}
\end{eqnarray}

2) ({\it Multiplication.}) We have a $K $-bilinear multiplication
$V\times V\to V$ carrying $ V_{\alpha}\times V_{\beta}$ to $
V_{\alpha\beta}$ for all $\alpha, \beta\in G$. For any $u\in
V_{\alpha}$, $v\in V_{\beta}$, $w\in V_{\gamma}$ with $\alpha,
\beta, \gamma \in G$,
\begin{eqnarray}
(uv)w=\kappa(\alpha,\beta,\gamma)\,u(vw). \label{associativity}
\end{eqnarray}
There is  a unit element  $1_V\in V_{\varepsilon}$ such that $1_V
u=u1_V=u$ for all $u\in V $.

3) ({\it Inner product.}) We have   a non-degenerate symmetric
$K$-bilinear form $$\eta_{\varepsilon}: V_{\varepsilon} \otimes
V_{\varepsilon}\to K$$ such that for all $\alpha\in G$, the pairing
\begin{eqnarray}
V_{\alpha} \otimes V_{\alpha^{-1}}\to K, \,\,  u\otimes v \mapsto  \eta_{\varepsilon}(uv\otimes 1_V) \label{nas}
\end{eqnarray}
(where $u\in V_{\alpha}$, $v\in V_{\alpha^{-1}}$) is non-degenerate.


4) ({\it Projective action of $G$.}) For each $  \beta \in G$, we
have a $K$-linear isomorphism $\varphi_{\beta}:V\to V$ carrying
$V_{\alpha}$ to $V_{\beta\alpha\beta^{-1}}$ for all $\alpha$ and
such that
\begin{eqnarray}
\varphi_{\beta}(av)=\,^{\beta}a\,\varphi_{\beta}(v)  \,\, {\text {for any}}\,\,   a\in A   \,\, {\text {and}}\,\,
v\in V,
\label{mor}
\end{eqnarray}
\begin{eqnarray}
\varphi_{\beta} \vert_{V_{\beta}} ={\text {id}}_{V_{\beta}}:V_{\beta} \to V_{\beta}, \label{id}
\end{eqnarray}
\begin{eqnarray}
\varphi_{\beta}(1_V)=1_V ,
\label{mor+}
\end{eqnarray}
\begin{eqnarray}
vu=\varphi_{\beta}(u)v \,\, {\text {for any}}\,\,   u\in V    ,
v\in   V_{\beta}, \label{com}
\end{eqnarray}
\begin{eqnarray}
\eta_{\varepsilon}(\varphi_{\beta}(u)\otimes \varphi_{\beta}(v)) =\eta_{\varepsilon}(u\otimes v)
\,\, {\text {for any}}\,\,   u,v\in V_{\varepsilon},\label{epg}
\end{eqnarray}
\begin{eqnarray} \varphi_{\beta}(u)\, \varphi_{\beta}(v)=l_{\alpha,\gamma}^{\beta}\, \varphi_{\beta}(uv)
\,\, {\text {for any}}\,\,  {\alpha}, {\gamma} \in G \,\, {\text {and}}\,\,  u\in  V_{\alpha}, v\in V_{\gamma}   ,
\label{pd}
\end{eqnarray}
where
\begin{eqnarray*}
l_{\alpha,\gamma}^{\beta}=\kappa( {\beta}\alpha {\beta}^{-1},  {\beta}\gamma {\beta}^{-1},
\beta)\,\kappa( {\beta} \alpha {\beta}^{-1},\beta,\gamma)^{-1}\,\kappa(\beta,\alpha,\gamma)\in A,
\end{eqnarray*}
\begin{eqnarray}
\varphi_{\gamma\beta}\vert_{V_{\alpha}}=h_{\gamma, \beta}^{\alpha}\,(\varphi_{\gamma}\circ \varphi_{\beta})\vert_{V_{\alpha}}
\,\, {\text {for all}}\,\,   \alpha,\beta,\gamma\in G,
\label{pgb}
\end{eqnarray}
where
\begin{eqnarray*}
h_{\gamma, \beta}^{\alpha}= \kappa(\gamma\beta\alpha(\gamma\beta)^{-1},\gamma,\beta)\,
\kappa(\gamma,\beta\alpha\beta^{-1},\beta)^{-1}\kappa(\gamma,\beta,\alpha)\in A.
\end{eqnarray*}

5) ({\it The trace condition.}) For any $\alpha, \beta\in G$ and
$c\in V_{\alpha\beta\alpha^{-1}\beta^{-1}}$,
\begin{eqnarray}
\begin{aligned}
{\text {Tr}} (\kappa(\alpha\beta\alpha^{-1}\beta^{-1},\beta,\alpha)\, \mu_c\, \varphi_{\beta}:V_{\alpha}\to V_{\alpha})\\
={\text {Tr}} (\kappa(\alpha\beta\alpha^{-1}\beta^{-1},\beta\alpha\beta^{-1},\beta)
\,\varphi_{\alpha}^{-1}\mu_c:V_{\beta}\to V_{\beta}),\label{tr}
\end{aligned}
\end{eqnarray}
where $\mu_c $ is left  multiplication $V\to V, v\mapsto cv$ by $c$
and ${\text {Tr}}$ is the standard trace of linear maps.

 We have the following elementary consequences of the definition. The $K[A]$-bilinearity of multiplication in $ V$ implies that
\begin{eqnarray}
a(uv)=(au)v=u(av)  \label{multabil}
\end{eqnarray}
for all $a\in A$ and $u,v\in V$. Note that if $u\in V_\alpha$ and
$v\in V_\beta$, then for all $a\in A$, $$^{\alpha}a (u v)= (^{\alpha
 }a u) v =  ( a u) v=a(uv)$$
 and similarly $^{\beta}a (u  v)=  a(uv)$.  Formula
\eqref{pgb} applied to $\gamma=\beta=\varepsilon$ implies that
\begin{eqnarray}
\varphi_{\varepsilon}={\text {id}}:V\to V.  \label{idid}
\end{eqnarray}

In the following lemma and in the sequel the pairing \eqref{nas} is
denoted by $\eta_\alpha$.

\begin{lemma}
For any $a\in A$, $u\in V_{\alpha}$, $v\in V_{\alpha^{-1}}$ with
$\alpha\in G$,
\begin{eqnarray*}
\eta_{\alpha}(au\otimes v)=\eta_{\alpha}(u\otimes av),
\end{eqnarray*}
\begin{eqnarray*}
\eta_{\alpha}(u\otimes v)=\eta_{\alpha^{-1}}(\kappa(\alpha^{-1},\alpha,\alpha^{-1})^{-1}\, v\otimes u).
\end{eqnarray*}
For any $\alpha, \beta\in G$, $u\in V_{\alpha}$, $v\in
V_{\alpha^{-1}}$,
\begin{eqnarray*}
\eta_{\beta\alpha\beta^{-1}}(\varphi_{\beta}(u)\otimes \varphi_{\beta}(v))=\eta_{\alpha}(l_{\alpha,\alpha^{-1}}^{\beta} u\otimes v).
\end{eqnarray*}
For any $\alpha, \beta\in G$, $u\in V_{\alpha}$, $v\in V_{\beta}$
and $w\in V_{(\alpha\beta)^{-1}}$,
\begin{eqnarray*}
\eta_{\alpha\beta}(uv\otimes w)=\eta_{\alpha}(\kappa(\alpha,\beta,(\alpha\beta)^{-1})\, u\otimes vw).
\end{eqnarray*}
\end{lemma}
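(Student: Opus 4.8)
I will prove the four displayed identities in the order first, fourth, second, third, from easiest to hardest. Throughout write $\epsilon(x)=\eta_\varepsilon(x\otimes 1_V)$ for $x\in V_\varepsilon$, so that by \eqref{nas} we have $\eta_\alpha(u\otimes v)=\epsilon(uv)$. Two preliminary facts drive everything. First, by \eqref{multabil} the functional $\epsilon$ satisfies $\epsilon((au)v)=\epsilon(a(uv))=\epsilon(u(av))$ for $a\in A$. Second, \eqref{mor+} and \eqref{epg} give $\epsilon(\varphi_\beta(x))=\eta_\varepsilon(\varphi_\beta(x)\otimes\varphi_\beta(1_V))=\epsilon(x)$, i.e. $\epsilon$ is $\varphi$-invariant; combining this with \eqref{mor} (write $a\varphi_\beta(x)=\varphi_\beta({}^{\beta^{-1}}a\cdot x)$) upgrades it to $\epsilon(a\,\varphi_\beta(x))=\epsilon({}^{\beta^{-1}}a\cdot x)$ for all $a\in A$ and $x\in V_\varepsilon$.

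The first identity is then immediate: $\eta_\alpha(au\otimes v)=\epsilon((au)v)=\epsilon(u(av))=\eta_\alpha(u\otimes av)$. The fourth identity is equally direct: expanding its left side as $\epsilon((uv)w)$ and applying associativity \eqref{associativity} with third degree $\gamma=(\alpha\beta)^{-1}$ gives $\epsilon\big(\kappa(\alpha,\beta,(\alpha\beta)^{-1})\,u(vw)\big)$; since $vw\in V_{\alpha^{-1}}$, moving the $A$-factor onto $u$ by bilinearity reproduces exactly $\eta_\alpha(\kappa(\alpha,\beta,(\alpha\beta)^{-1})u\otimes vw)$.

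For the second identity the plan is to convert $uv$ into a $\varphi_\alpha$-image and then remove $\varphi_\alpha$ via the invariance of $\epsilon$. Commutativity \eqref{com} with $\beta=\alpha$ gives $uv=\varphi_\alpha(v)\,u$, and since $u\in V_\alpha$ relation \eqref{id} lets me rewrite $u=\varphi_\alpha(u)$; then \eqref{pd} yields $\varphi_\alpha(v)\varphi_\alpha(u)=l^{\alpha}_{\alpha^{-1},\alpha}\,\varphi_\alpha(vu)$ with $vu\in V_\varepsilon$. The invariance property of $\epsilon$ turns $\epsilon(uv)$ into $\epsilon\big({}^{\alpha^{-1}}l^{\alpha}_{\alpha^{-1},\alpha}\cdot vu\big)$. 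It remains to simplify the $A$-scalar: a one-line computation gives $l^{\alpha}_{\alpha^{-1},\alpha}=\kappa(\alpha,\alpha^{-1},\alpha)$, and specializing the normalized $3$-cocycle identity to the tuple $(\alpha^{-1},\alpha,\alpha^{-1},\alpha)$ collapses (by $\kappa(\varepsilon,-,-)=\kappa(-,\varepsilon,-)=\kappa(-,-,\varepsilon)=1$) to ${}^{\alpha^{-1}}\kappa(\alpha,\alpha^{-1},\alpha)=\kappa(\alpha^{-1},\alpha,\alpha^{-1})^{-1}$. Feeding this in and moving the scalar onto $v$ by bilinearity yields the claim.

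The third identity follows the same template, and this is where the genuine work sits. By \eqref{nas} its left side is $\epsilon(\varphi_\beta(u)\varphi_\beta(v))$, which by \eqref{pd} equals $\epsilon\big(l^{\beta}_{\alpha,\alpha^{-1}}\varphi_\beta(uv)\big)$; the invariance property of $\epsilon$ converts this to $\epsilon\big({}^{\beta^{-1}}l^{\beta}_{\alpha,\alpha^{-1}}\cdot uv\big)$, while the right side is $\epsilon\big(l^{\beta}_{\alpha,\alpha^{-1}}\cdot uv\big)$. Thus the identity reduces to showing that $l^{\beta}_{\alpha,\alpha^{-1}}$ is fixed by the action of $\beta$, that is ${}^{\beta}l^{\beta}_{\alpha,\alpha^{-1}}=l^{\beta}_{\alpha,\alpha^{-1}}$. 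I expect this to be the main obstacle: one must expand $l^{\beta}_{\alpha,\alpha^{-1}}$ into its three $\kappa$-factors, apply the $3$-cocycle relation to each of the resulting terms ${}^{\beta}\kappa(\cdots)$, and check that after the normalized cancellations everything recombines to $l^{\beta}_{\alpha,\alpha^{-1}}$. This is a mechanical but lengthy manipulation of the normalized $3$-cocycle condition, exactly parallel to — though longer than — the single cocycle collapse used for the second identity. Presenting the parts in the order first, fourth, second, third places this heaviest computation last; the second displayed formula (the symmetry of $\eta$) is the content of the second identity, and the $A$-bilinearity formula is the first.
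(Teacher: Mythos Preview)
Your arguments for the first and fourth identities are correct and are exactly the intended ones. Your proof of the second identity is also correct, but the paper's route is shorter: instead of going through \eqref{pd} and the $\varphi$-invariance of $\epsilon$, the paper applies \eqref{pgb} with $\gamma=\alpha$, $\beta=\alpha^{-1}$ (together with \eqref{id} and \eqref{idid}) to obtain directly that $\varphi_\alpha(v)=\kappa(\alpha^{-1},\alpha,\alpha^{-1})^{-1}v$ for every $v\in V_{\alpha^{-1}}$; then a single use of \eqref{com} gives $uv=\varphi_\alpha(v)u=\kappa(\alpha^{-1},\alpha,\alpha^{-1})^{-1}vu$ and the identity follows. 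Your detour through $l^\alpha_{\alpha^{-1},\alpha}$ and the cocycle relation on $(\alpha^{-1},\alpha,\alpha^{-1},\alpha)$ reaches the same endpoint but with more moving parts.

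For the third identity your reduction is right: using \eqref{pd}, \eqref{mor}, and the $\varphi$-invariance of $\epsilon$ one lands on $\epsilon({}^{\beta^{-1}}l^\beta_{\alpha,\alpha^{-1}}\cdot uv)$ versus $\epsilon(l^\beta_{\alpha,\alpha^{-1}}\cdot uv)$, so the issue is indeed the $\beta$-invariance of $l^\beta_{\alpha,\alpha^{-1}}\in A$. However, your description of this last step as ``exactly parallel to --- though longer than --- the single cocycle collapse'' used for the second identity undersells it. Expanding each ${}^\beta\kappa(\cdots)$ via the cocycle relation and cancelling does \emph{not} immediately recombine into $l^\beta_{\alpha,\alpha^{-1}}$: after those three expansions you are left with a ratio of nine $\kappa$-values that still needs several further applications of the cocycle identity (for instance on tuples such as $(\beta,\beta\alpha\beta^{-1},\beta,\alpha^{-1})$ and others) before it collapses. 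So the plan is sound, but the final verification is a genuine computation, not a one-step normalization. The paper, incidentally, leaves the third and fourth identities to the reader, so it does not offer a competing argument here.
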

\begin{proof}
 We check only the first two identities  leaving the other two to the reader.
 We have
\begin{eqnarray*}
\eta_{\alpha}(au\otimes v)&=&\eta_{\varepsilon}((au)v\otimes 1_V)\\
&=&\eta_{\varepsilon}(u(av)\otimes 1_V)\\
&=&\eta_{\alpha}(u\otimes av).
\end{eqnarray*}
Formulas  (\ref{id}), \eqref{idid}, and  (\ref{pgb}) with $\gamma=\alpha$, $\beta=\alpha^{-1}$ imply  the identity
$\varphi_{\alpha}(v)=
\kappa(\alpha^{-1},\alpha,\alpha^{-1})^{-1}\,v$ for all $v\in
V_{\alpha^{-1}}$. Therefore
\begin{eqnarray*}
\eta_{\alpha}(u\otimes v)&=&\eta_{\varepsilon}(uv\otimes 1_V)\\
&=&\eta_{\varepsilon}(\varphi_{\alpha}(v)u\otimes 1_V)\\
&=&\eta_{\varepsilon}( \kappa(\alpha^{-1},\alpha,\alpha^{-1})^{-1} \,v u\otimes 1_V)\\
&=&\eta_{\alpha^{-1}}(\kappa(\alpha^{-1},\alpha,\alpha^{-1})^{-1}  \, v\otimes u).
\end{eqnarray*}
\end{proof}

Given   $z\in K^*$ and  a TF-algebra, we can multiply  the inner
product   $\eta_{\varepsilon}$ by $z $
 (keeping the rest of the data) and obtain thus a new TF-algebra. This operation on TF-algebras is
called {\it  $z$-rescaling}.

  Let $V$,  $W$ be TF-algebras over   $(G,A,\kappa)$.  A $K[A]$-isomorphism
 $f:V\to W$ is an {\it isomorphism} of TF-algebras if $f$ is an isomorphism of algebras
 such that  $
\eta_{\varepsilon}(f(u)\otimes f(v))=\eta_{\varepsilon}(u\otimes v)
$ for all $u,v\in V_{\varepsilon}$  and $ f \varphi_{\beta}
=\varphi_{\beta} f$ for all $\beta\in G$.

\subsection{Examples}
The definition of a TF-algebra over   $(G,A,\kappa)$ generalizes
both the notion of a crossed Frobenius $G$-algebra  \cite{tu}  and
the notion of a  $A$-Frobenius algebra   \cite{bt}. Consequently we
have the following  two sources of examples.
\begin{example}
If  $L$ is a crossed Frobenius  $G$-algebra,   then $L$ is a
TF-algebra over $(G,A,\kappa)$  where $A$ is the trivial group and
$\kappa$ is the trivial cocycle.
\end{example}
\begin{example} If $V$ is a $A$-Frobenius algebra,
  then $V$ is a TF-algebra over $(G,A,\kappa)$ where $G$ is the trivial group and $\kappa$ is the trivial cocycle.
\end{example}
Further  examples of TF-algebras are   constructed   in
Section~\ref{uTFC}.

\subsection{Coboundary equivalence}\label{kprime} Let   $\kappa:G^3\to A$ be a normalized 3-cocycle.
Given a normalized 2-cochain
$\omega:G^2\to
 A$, the map $\kappa'=  \delta^2(\omega) \, \kappa:G^3\to A$ is a normalized 3-cocycle    cohomological to $\kappa$.
 Using $\omega$, we can transform a  TF-algebra $V$ over $(G,A,\kappa)$ into a TF-algebra $V^{\omega}$
 over $(G,A,\kappa')$  as follows. The
underlying $G$-graded $K[A]$-modules of $V^{\omega}$ and $V$ are the
same. The inner product   on $V^{\omega}_\varepsilon=V_\varepsilon$
is the same as in $V$. Multiplication $\cdot^{\omega}$ on
$V^{\omega}$ is defined by
\begin{eqnarray*}
u\cdot^{\omega} v=\omega(\alpha,\beta)^{-1}\, u\cdot v,
\end{eqnarray*}
for  $u \in V_\alpha, v\in V_\beta$, where $\cdot$ is multiplication
in $V$. Given $\beta\in G$, the   automorphism
$\varphi^{\omega}_\beta$ of $V^{\omega}$ is defined by
\begin{eqnarray*}
\varphi^{\omega}_\beta \vert_{V_\alpha}=\omega(\beta,\alpha)^{-1}\, \omega(\beta\alpha\beta^{-1},\beta) \,
\varphi_{\beta}\vert_{V_\alpha}
\end{eqnarray*} for all $\alpha \in G$.
Direct computations show that $V^\omega$ is a TF-algebra
 over $(G,A,\kappa')$.   We   say
that $V^{\omega}$ is obtained from $V$ by a
  {\it coboundary transformation}.
 This transformation  defines an equivalence
 between the category of TF-algebras  over $(G,A,\kappa)$ and their isomorphisms and
the category of TF-algebras  over $(G,A,\kappa')$ and their
isomorphisms.

Given $k\in H^3(G,A)$, the coboundary transformations and the
isomorphisms generate an equivalence relation on the class of
TF-algebras over the triples $(G,A,\kappa)$ where $\kappa:G^3\to A$
runs over all normalized 3-cocycles representing $k$. This relation
is called {\it coboundary equivalence} and the corresponding
equivalence classes   are called TF-algebras over $(G,A,k)$.

\section{Preliminaries on HQFTs and $k$-invariants}

\subsection{HQFTs}
We recall the definition of an HQFT from \cite{tu},  \cite{tu2}. We say that a topological space is {\it pointed}
 if all its connected components are provided with base points.
By   {\it maps} of pointed spaces we mean continuous maps carrying base points to base points.

Fix   a pointed path connected topological space $X$. An {\it $X$-manifold} is a pair $(M,g)$,
 where $M$ is a pointed closed oriented manifold and $g$ is a   map $M\to X$.
  An {\it $X$-homeomorphism}  between $(M,g)$ and $(M',g')$ is an orientation preserving (and base point preserving)
   homeomorphism $f:M\to M'$ such that $g=g'f$. An empty set is  considered as an $X$-manifold of any given dimension.

An {\it $X$-cobordism}  between $X$-manifolds $(M_0,g_0)$ and $(M_1,g_1)$ is an oriented compact
 cobordism $(W,M_0,M_1)$ endowed with a map $g:W\to X$
  such that $\partial W=(-M_0) \amalg M_1$ and  $g |_{M_i}=g_i$ for $i=1,2$. Note that $W$ is not required to be pointed, but $M_0$ and $M_1$ are pointed.
  An $X$-homeomorphism   between two $X$-cobordisms
  $(W,M_0,M_1,g)$ and $(W',M_0',M_1',g')$   is an orientation preserving
   homeomorphism of triples
   $F:(W,M_0,M_1)\to (W',M_0',M_1')$ such that $g=g'F$ and $F$ restricts to $X$-homeomorphisms
    $M_0\to M'_0$ and $M_1\to M'_1$. For example, for any    $X$-manifold $(M,g)$ we have
{\it the cylinder cobordism} $ (M\times [0,1],M\times 0,M\times 1,\overline{g}) $ between two copies of $(M,g)$. Here
$\overline{g}$ is the composition of the projection $M\times [0,1]\to M$  with $g$. Any $X$-homeomorphism
of $X$-manifolds multiplied by ${\text {id}}_{[0,1]}$ yields   an $X$-homeomorphism of the corresponding
cylinder cobordisms.

  A {\it (d+1)-dimensional Homotopy Quantum Field Theory $({V},\tau)$
 with target $X$} assigns a   finite-dimensional $K$-vector space ${V}_M$ to any $d$-dimensional $X$-manifold, a $K$-isomorphism $f_{\#}:{V}_M\to {V}_{M'}$ to any $X$-homeomorphism of $d$-dimensional $X$-manifolds $f:M\to M'$ and a $K$-homomorphism $\tau(W):{V}_{M_0}\to {V}_{M_1}$ to any $(d+1)$-dimensional $X$-cobordism $(W,M_0,M_1)$.
 These vector spaces  and homomorphisms should satisfy the  following axioms:

(1) For any $X$-homeomorphisms of $d$-dimensional $X$-manifolds $f:M\to M'$, $f':M'\to M''$,
 we have $(f'f)_{\#}=f'_{\#}f_{\#}$.


(2) For any disjoint $d$-dimensional $X$-manifolds $M$, $N$, there
is a natural isomorphism ${V}_{M\amalg  N}={V}_M\otimes {V}_N$.

(3) ${V}_{\emptyset}=K$.

(4) For any $X$-homeomorphism of $(d+1)$-dimensional $X$-cobordisms
$$F:(W,M_0,M_1,g) \to  (W', M_0', M_1', g'),$$
the following diagram is commutative:
$$\begin{CD}
{V}_{(M_0,g|_{M_0})} @> (F|_{M_0})_{\#} >> {V}_{(M'_0,g'|_{M'_0})}\\
@VV\tau(W,g)V @VV\tau(W',g')V\\
{V}_{(M_1,g|_{M_1})}@>(F|_{M_1})_{\#}>> {V}_{(M'_1,g'|_{M'_1})}
\end{CD}$$

(5) If a $(d+1)$-dimensional $X$-cobordism $W$ is a disjoint union
of $X$-cobordisms $W_1$, $W_2$, then $\tau(W)=\tau(W_1)\otimes
\tau(W_2)$.

(6) If an $X$-cobordism $(W,M_0,M_1)$ is obtained from two
$(d+1)$-dimensional $X$-cobordisms $(W_0,M_0,N)$ and $(W_1,N',M_1)$
by gluing along an $X$-homeomorphism $f:N\to N'$, then
$$\tau(W)=\tau(W_1)f_{\#}\tau(W_0):{V}_{M_0}\to {V}_{M_1}.$$

(7) For any $d$-dimensional $X$-manifold $(M,g)$,
$$\tau(M\times [0,1],M\times 0,M\times 1,\overline{g})= {\text {id}}:{V}_M\to {V}_M, $$
where we identify $M\times 0$ and $M\times 1$ with $M$ in the obvious way  and the triple
 $(M\times [0,1],M\times 0,M\times 1,\overline{g})$ is the cylinder cobordism over $(M,g)$.

(8) For any $(d+1)$-dimensional $X$-cobordism $(W,M_0,M_1,g)$, the
homomorphism $\tau(W)$ is preserved under homotopies of $g$ constant
on $\partial W=M_0\amalg  M_1$.

\begin{remark} In this definition we follow \cite{tu2} rather than \cite{tu}. The difference is that axiom (7) in \cite{tu2} is weakened in comparison with the corresponding axiom in \cite{tu}.
\end{remark}

For shortness, HQFTs with target $X$ will be also called $X$-HQFTs.
For examples   of $X$-HQFTs, see \cite{tu},  \cite{tu2}. Note here
that any cohomology class   $\theta\in H^{d+1}(X;K^*)$ determines a
$(d+1)$-dimensional $X$-HQFT $({V}^\theta, \tau^\theta)$.

An {\it isomorphism of $X$-HQFTs} $\rho:({V},\tau)\to ({V}',\tau')$
is a family of $K$-i\-so\-mor\-phisms $\{\rho_M : {V}_M \to
{V}_M'\}_M$, where $M$ runs over  all $d$-dimensional $X$-manifolds,
  $\rho_{\emptyset}= {\text {id}}_K$,   $\rho_{M\amalg  N} = \rho_{M}\otimes \rho_{N}$ for all $M, N$,
 and the
 natural square diagrams associated with the $X$-homeomorphisms and $X$-cobordisms are commutative.

\subsection{The   $k$-invariant}\label{XXX} Let   $X$ be a   path connected topological space with base point $x_0$.
We recall from \cite{em} the definition of the first  $k$-invariant of~$X$.

 Let $p\colon [0,1]\to S^1=\{z\in \CC\, \vert\, \vert z\vert=1\}$
  be the map carrying $t\in [0,1]$ to $-i\, \exp(-2\pi i t)$.
  We provide $S^1$ with clockwise
 orientation and base point~$ -i=p(0)=p(1)$.
  Set $G=\pi_1(X,x_0)$  and recall that $A=\pi_2(X,x_0)$ is a left
$G$-module in the standard way.
  For  each $\alpha\in G$, fix a loop $  {u}_{\alpha} \colon  S^1\to X$ carrying
   $-i$ to $x_0$ and representing~$\alpha$.
   We assume that the loop ${u}_{\varepsilon}$  representing the  neutral  element $\varepsilon\in G$
is the constant path at $x_0$.
 Fix a 2-simplex $\Delta_2$ with vertices $v_0,v_1, v_2$.
 For every $\alpha, \beta\in G$,   fix a map
 ${f}_{\alpha, \beta}:\Delta_2 \to  X$
such that
  for all $t\in [0,1]$, $$ {f}_{\alpha, \beta} ((1-t)
v_0+t v_1)=  {u}_{\alpha}\, p(t),\, {f}_{\alpha, \beta} ((1-t) v_1+t v_2)=  {u}_{\beta}\, p(t)
,\, $$
$${f}_{\alpha, \beta} ((1-t) v_0+t v_2)=  {u}_{\alpha\beta}\, p(t) \, .
$$
We  assume that ${f}_{ \varepsilon, \varepsilon}$ is the constant map   $\Delta_2 \to \{x_0\}$
  and that for all $\alpha\in G$, the maps ${f}_{ \varepsilon, \alpha}$ and ${f}_{\alpha,  \varepsilon}$
   are the standard
\lq\lq constant" homotopies between two copies of~${u}_{\alpha}$.
 We call   the collection   $\Omega=\{\{{u}_{\alpha}\}_{\alpha\in G}; \{
 {f}_{\alpha, \beta}\}_{\alpha,\beta \in G}\}$   a {\it  basic system of loops and triangles in  $X$}.

Figures \ref{fig:kinvd}--\ref{fig:trace} below  represent
$X$-surfaces using the following conventions.
  The vertices in each figure are labeled by integers; the vertex   labeled by an integer $i$ is denoted $v_i$.
  With every edge   in the figure we associate an
  element of $G$ called its label. For some edges, the labels are indicated by   Greek letters in the picture.
  The labels of all the other edges  can be computed uniquely using
   the following rule:
 in
   any triangle   $v_i  v_j  v_k$ with $i<j<k$ the
 label  of  the edge $ v_iv_k $ is   the product of the labels of    $  v_i v_j $
   and $ v_j v_k $.   Each   figure below represents a compact oriented  surface
   $\Sigma$ endowed with a map $f: \Sigma \to X$.
 The map  $f$  carries  all vertices
 of $\Sigma$ to the base point $x_0\in X$.
   The restriction of $f$ to any  edge $ v_i v_j $ with $i<j$ and with label  $\rho\in G$
    is given by   $ f( (1-t)
v_i+t v_j )= u_\rho p (t)$ for all $t\in [0,1]$. The restriction of
$f$  to
 any triangle  $v_i  v_j  v_k$ with $i<j<k$  is given by   $$ f(t_0
v_i+t_1 v_j +t_2 v_k )= f_{\rho, \eta}  (t_0 v_0+t_1 v_1 +t_2
v_2),$$ where $\rho, \eta$ are the labels of $v_iv_j$ and $v_jv_k$
respectively and
  $t_0, t_1, t_2$ run over   non-negative real numbers whose sum is equal to
  $1$. The map $f:\Sigma\to X$ is considered up to
homotopy    constant on $\partial \Sigma$.

\begin{figure}[h, t]
    \centering
        \includegraphics[angle=0,width=0.4\textwidth]{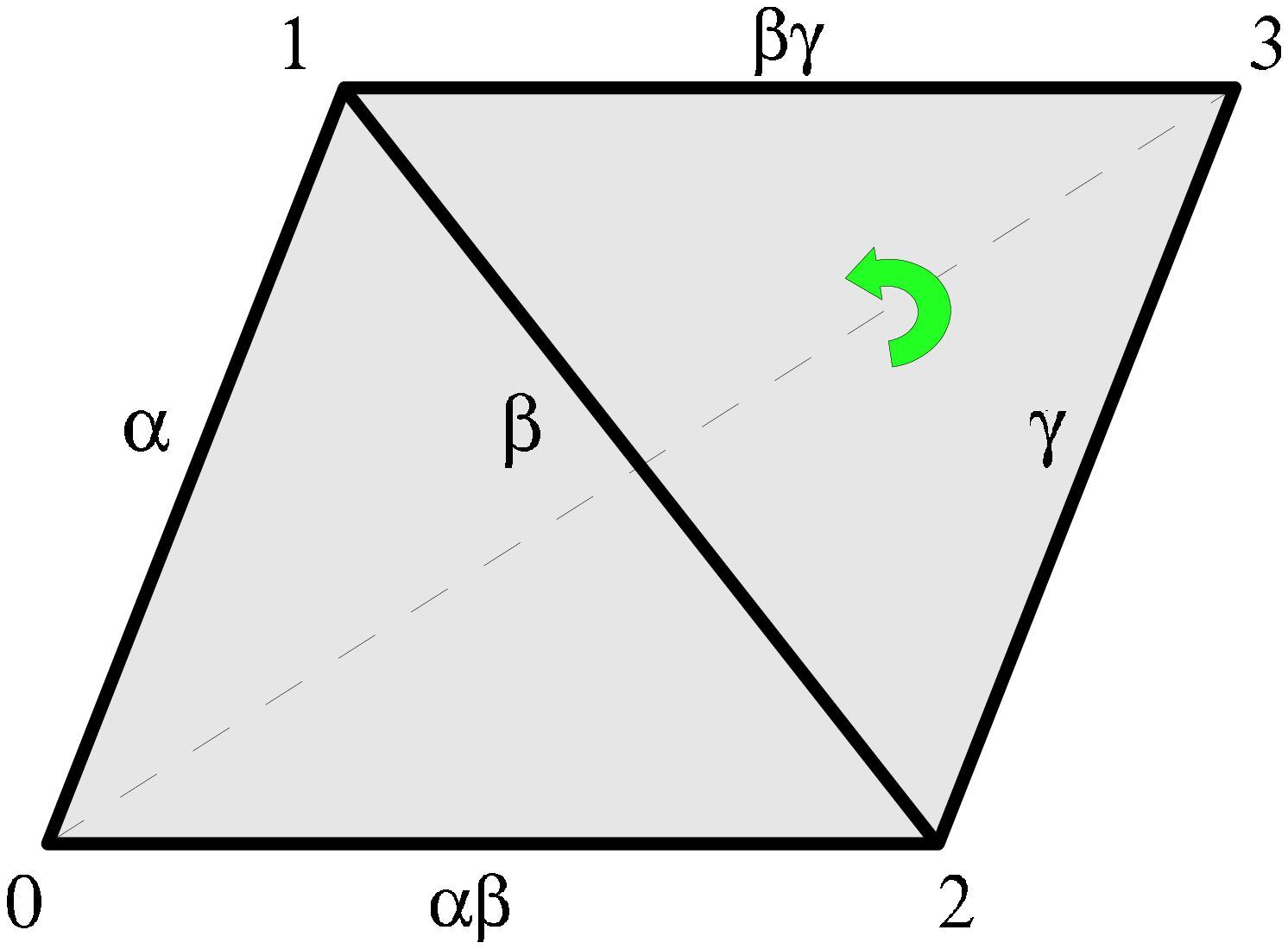}
    \caption{A map $ \partial \Delta_3 \to X $}
    \label{fig:kinvd}
\end{figure}

Let $\Delta_3$ be the standard 3-simplex   with vertices $v_0,v_1,v_2,v_3$.
 Given $\alpha, \beta, \gamma \in G$,   Figure \ref{fig:kinvd} describes
  a map   from the 2-sphere $\partial \Delta_3$ to $X$. Here
   the labels of  the edges $ v_0v_2 , v_1 v_3 $, and $v_0v_3$
  are $\alpha \beta, \beta \gamma$, and $\alpha \beta  \gamma$, respectively.
We take   $v_0$ as the base point of   $\partial \Delta_3$. With
this choice, the map $ \partial \Delta_3 \to X $ in Figure
\ref{fig:kinvd} represents an element of $A=\pi_2(X,x_0)$ denoted
$\kappa(\alpha, \beta, \gamma)$. This   defines a map
$\kappa=\kappa^\Omega: G^3\to A$.   It is known that $\kappa$ is a
cocycle. It follows from the definitions  (and from the   choice of
the maps ${f}_{ \alpha,\varepsilon},
  {f}_{  \varepsilon,\alpha }$)  that   $\kappa$ is normalized in the sense of Section \ref{preli}. The cohomology
class of $\kappa$ in $H^3(G, A)$ is independent of
 the choice of $\Omega$. This cohomology class is  the first $k$-invariant of
  $X$.

\section{The TF-algebra underlying a 2-dimensional  HQFT} $\,$

 Let   $X$ be a   path connected topological space with base point
 $x_0$.
 We   derive from any $2$-dimensional $X$-HQFT $({V},\tau)$      its {\it underlying TF-algebra}.

 For shortness, one-dimensional $X$-manifolds will be called {\it $X$-curves} and   two-dimensional $X$-cobordisms
 will be called {\it $X$-surfaces}. We say that two $X$-surfaces with the  same bases $$(W,M_0,M_1,g:W\to X) \quad {\text {and}}\quad
 (W',M_0,M_1,g':W'\to X)$$
 are {\it h-equivalent} if  there is a homeomorphism $F:W\to W'$ constant on the boundary
and such that
  $  g'F:W\to X $ is homotopic to $g$ via a homotopy  constant on the
 boundary. The axioms of an HQFT imply that   then
 $\tau(W)=\tau(W'):V_{M_0}\to V_{M_1}$.

Fix    a    basic system of loops and triangles
$\Omega=\{\{{u}_{\alpha}\}_{\alpha\in G}; \{
 {f}_{\alpha, \beta}\}_{\alpha,\beta \in G}\}$    in  $X$, where
 $G=\pi_1(X,x_0)$. Let  $\kappa:G^3\to A=\pi_2(X,x_0)$  be
 the cocycle constructed in Section 2.2.
 We construct a TF-algebra   $ {V}^\Omega$  over $(G, A, \kappa)$   in several steps.

Step 1 ({\it Underlying module and action of $A$}).  For each
$\alpha \in G$, consider the pointed oriented circle $S^1$
 and the map ${u}_\alpha: S^1\to X$ as in
Section \ref{XXX}.
The pair $(S^1, {u}_\alpha)$ is an $X$-curve. Set
 $V_{\alpha}={V}_{(S^1, {u}_\alpha)}$.
By the definition of an HQFT, $V_{\alpha}$ is a finite-dimensional $K$-vector space.

For all $\alpha\in G$, the group $A=\pi_2(X,x_0)$ acts on $V_{\alpha}$   by
\begin{eqnarray*}
av=\tau({S} (\alpha,a))(v).
\end{eqnarray*}
where $a\in A$,  $v\in V_{\alpha}$, and ${S}(\alpha, a)$ is the
$X$-annulus obtained as a connected sum of the cylinder cobordism
over $(S^1, {u}_\alpha)$ and a map $S^2\to X$ representing $a$. The
$X$-annulus ${S}(\alpha, a)$ is shown in the left part of Figure
\ref{fig:multiv}. Here and below  we use external arrows to indicate
the edges glued to each other (keeping the surface oriented).

  Observe that the $X$-annulus   in the right part of Figure
\ref{fig:multiv} is h-equivalent to  ${S}(\alpha, a)$. Therefore,
  $av=\,^{\alpha}av$ for all   $a\in A$ and  $v\in V_{\alpha}$.

\begin{figure}[h,t]
    \centering
        \includegraphics[width=0.6\textwidth]{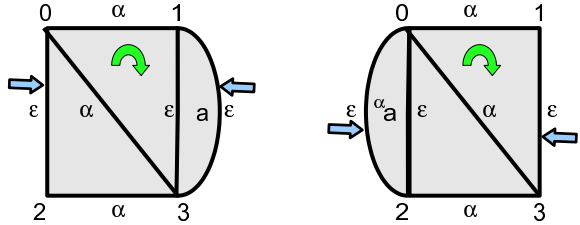}
    \caption{The $X$-annulus ${S}(\alpha, a)$}
    \label{fig:multiv}
\end{figure}

Step 2 ({\it Multiplication}). For   $\alpha, \beta\in G$, consider
the $X$-co\-bor\-dism (a disk with two holes) ${{D}}(\alpha, \beta)$
in Figure \ref{fig:prod}. Composing the
  map $$V_{\alpha} \times V_\beta \to V_{\alpha}\otimes
V_{\beta}, \, (u,v)\mapsto u\otimes v$$ with the homomorphism
  $$ \tau({{D}}(\alpha, \beta)):V_{\alpha}\otimes
V_{\beta}\to V_{\alpha\beta} $$  we obtain  a $K$-linear
multiplication $V_{\alpha} \times V_\beta \to V_{\alpha \beta}$.
This extends to multiplication in $ \oplus_{\alpha\in G}\, V_\alpha$
by linearity.

\begin{figure}[h,t]
    \centering
        \includegraphics[width=0.60\textwidth]{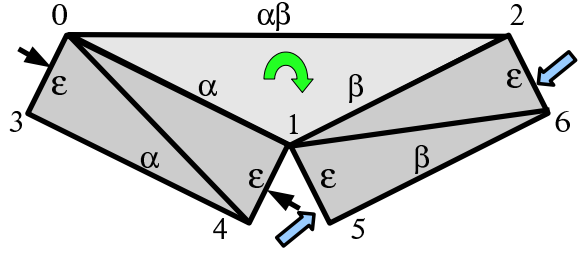}
    \caption{The $X$-disk with 2 holes ${{D}}(\alpha, \beta)$}
    \label{fig:prod}
\end{figure}

It is clear that for any $a\in A$, the glueing of ${S}(\alpha,a)$ to
${{D}}(\alpha, \beta)$  along the left bottom base of ${{D}}(\alpha,
\beta)$ and the  glueing of ${S}(\alpha\beta,a)$ to ${{D}}(\alpha,
\beta)$ along the top base of ${{D}}(\alpha, \beta)$ yield
h-equivalent $X$-surfaces. Applying~$\tau$, we obtain
 $(au)v=a(uv)$ for any $u\in V_\alpha$ and $v\in V_\beta$.
Similarly,  $(au)v=u(av)$. Hence, multiplication in
$\oplus_{\alpha\in G}\, V_\alpha $ is $K[A]$-bilinear.

To check the twisted associativity (\ref{associativity}),   note
that $(uv)w $ is computed by applying $\tau$ to the $X$-surface (a
disk with three holes)  obtained by glueing ${{D}}(\alpha, \beta)$
to ${{D}}(\alpha\beta, \gamma)$. Similarly, $u(vw)$ is computed by
applying $\tau$ to the $X$-surface obtained by glueing ${{D}}(\beta,
\gamma)$ to ${{D}}(\alpha,\beta \gamma)$. These $X$-surfaces are
shown in Figure $\ref{fig:asoci}$  where the external arrows
indicating the glueing of sides are omitted. It is easy to see that
the left $X$-surface is h-equivalent to a connected sum of the right
$X$-surface with the map $S^2\to X$ used to define
$\kappa(\alpha,\beta,\gamma) \in A$. This  implies
Formula~(\ref{associativity}). Note for future use that the narrow
\lq\lq collar-type" rectangles in Figure~\ref{fig:asoci} play no
role in the argument but are necessary to define the $X$-surfaces at
hand. In some of the figures below we  omit these collar rectangles
to simplify the figures.


\begin{figure}[h, t]
    \centering
        \includegraphics[width=0.8\textwidth]{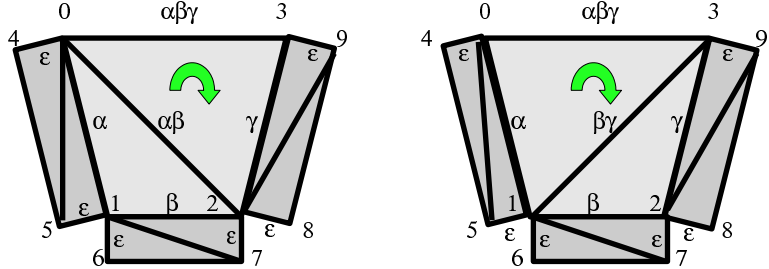}
    \caption{Proof of the identity $(uv)w= \kappa(\alpha,\beta,\gamma)\, u (vw)$}
    \label{fig:asoci}
\end{figure}
Next consider an oriented 2-disk ${{B}}$   mapped   to
$\{x_0\}\subset X$ and viewed as an $X$-cobordism between an empty
set and $(S^1, {u}_{\varepsilon})$. We have a map $\tau({{B}}):K\to
V_{\varepsilon}$, and we  set
 $1_V=\tau({{B}})(1) $.
One   easily sees that $1_Vu=u1_V=u$ for all $u\in V$.

Step 3 ({\it Inner product}). Consider the $X$-annulus
${{C_{--}}}(\alpha,\varepsilon)$ shown in  Figure~\ref{fig:nedf}. The subscript $--$ reflects the fact that the orientation on both boundary components of the annulus
is opposite to the orientation induced from  the annulus. Set
$$\eta_{\alpha}=\tau({{C_{--}}}(\alpha,\varepsilon)): V_{\alpha} \otimes V_{\alpha^{-1}}\to K.$$

\begin{figure}[h, t]
    \centering
        \includegraphics[angle=0,width=0.45\textwidth]{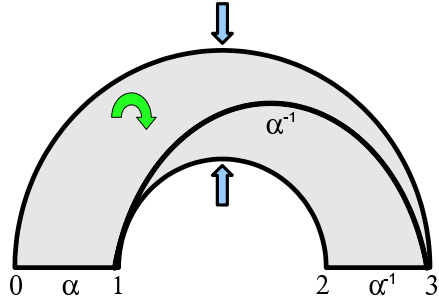}
    \caption{The $X$-annulus ${{C_{--}}}(\alpha,\varepsilon)$}
    \label{fig:nedf}
\end{figure}

 The $X$-annulus
${{C_{--}}}(\alpha,\varepsilon)$ can be obtained by glueing the
$X$-surfaces ${{C_{--}}}(\varepsilon,\varepsilon)$, ${{D}}(\alpha,
\alpha^{-1})$, and ${{B}}$. This yields
$$\eta_{\alpha}(u\otimes v)=\tau({{C_{--}}}(\alpha,\varepsilon))(u\otimes v)=\eta_{\varepsilon}(uv\otimes 1_V)$$
for all $u\in V_{\alpha}$ and $v\in  V_{\alpha^{-1}}$.

The   $X$-surfaces shown in Figure \ref{fig:pfned}  are
h-equivalent. Applying $\tau$ to the surface on the left we get
 $({\rm {id}}_{V_{\alpha^{-1}}}\otimes \eta_{\alpha})(\iota \otimes {\rm
 {id}}_{V_{\alpha^{-1}}})$, where $\iota$ is a   homomorphism
 $K\to {V_{\alpha^{-1}}}\otimes V_{\alpha}$.
Applying $\tau$ to the surface on the right we get ${\rm
{id}}_{V_{\alpha^{-1}}}$.
 The resulting equality $({\rm {id}}_{V_{\alpha^{-1}}}\otimes \eta_{\alpha})(\iota \otimes {\rm
 {id}}_{V_{\alpha^{-1}}})={\rm
{id}}_{V_{\alpha^{-1}}}$  implies the non-degeneracy of
$\eta_{\alpha}$.

 \begin{figure}[h, t]
    \centering
        \includegraphics[angle=0,width=0.7\textwidth]{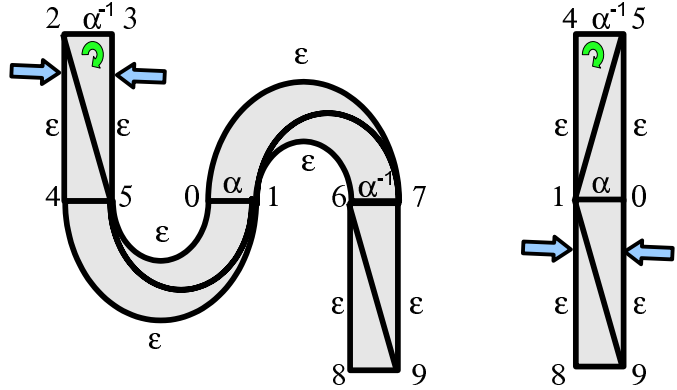}
    \caption{Proof of the non-degeneracy of
$\eta_{\alpha}$}
\label{fig:pfned}
\end{figure}

 Note finally that the map ${{C_{--}}}(\varepsilon,\varepsilon) \to X$ used to define  $\eta_{\varepsilon}$ is the constant map with value $x_0$. This easily implies that the form $\eta_{\varepsilon}$ is symmetric.


Step 4 ({\it Projective action of G}). Consider the $X$-annulus
$C_{-+}(\alpha,\beta)$ shown  in  Figure~\ref{fig:phib}. The subscript $-+$ reflects the fact that the orientation on the boundary component corresponding to $\alpha$ is  opposite to the orientation induced from the annulus, while  the orientation on the  boundary component corresponding to $\beta\alpha\beta^{-1}$ is induced from the orientation of the annulus. Set
$$\varphi_{\beta}=\tau(C_{-+}(\alpha,\beta)):V_{\alpha}\to V_{\beta\alpha\beta^{-1}}.$$

\begin{figure}[h, t]
    \centering
        \includegraphics[angle=0,width=0.3\textwidth]{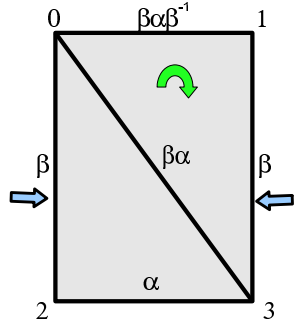}
    \caption{The $X$-annulus $C_{-+}(\alpha,\beta)$}
    \label{fig:phib}
\end{figure}

 The identity  $ \varphi_{\beta}(av)=\,^{\beta}a\, \varphi_{\beta}(v)$
for $a\in A$ and $v\in V_\alpha$ follows from the fact the
$X$-annulus obtained by glueing $C_{-+}(\alpha,\beta)$ to
${S}(\alpha,a)$ is h-equivalent to the $X$-annulus obtained by
glueing ${S}(\beta\alpha\beta^{-1},\,^{\beta}a)$ to
$C_{-+}(\alpha,\beta)$.

Figure \ref{fig:triv} represents two h-equivalent $X$-annuli (it is
understood that the vertical sides of the right rectangle are glued
in the usual way to make an annulus). The left $X$-annulus is
$C_{-+}(\alpha,\varepsilon)$. The right
 $X$-annulus is obtained by glueing $C_{-+}(\alpha,\beta)$
represented by the lower rectangle  to an $X$-annulus
$C'(\alpha,\beta)$ represented by the upper rectangle.   This proves
that $\varphi_{\beta}$ is invertible and
$\varphi_{\beta}^{-1}=\tau(C'(\alpha,\beta))$.

\begin{figure}[h, t]
    \centering
        \includegraphics[angle=0,width=0.7\textwidth]{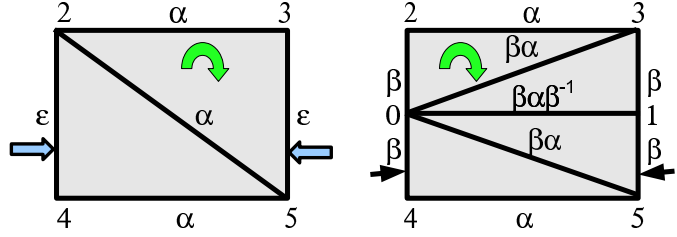}
    \caption{Proof of the invertibility of $\varphi_{\beta}$}
    \label{fig:triv}
\end{figure}

To prove that  $\varphi_{\beta} \vert_{V_{\beta}} ={\text
{id}}_{V_{\beta}}:V_{\beta} \to V_{\beta}$,  consider the $X$-annuli
in Figure~\ref{fig:dehn}. Using the Dehn twist of an annulus about
its core circle, one easily observes that   these two $X$-annuli are
h-equivalent. The $X$-annulus on the right is
$C_{-+}(\beta,\varepsilon)$, and the associated map
$\varphi_{\varepsilon}:V_{\beta}\to V_{\beta}$
  is the identity by Axiom (7) of an HQFT. It is
easy to see that a connected sum of the left $X$-annulus with the
map $S^2\to X$ used to define $\kappa(\beta, \varepsilon ,\beta) \in
A$  is h-equivalent to $C_{-+}(\beta,\beta)$. Since $\kappa(\beta,
\varepsilon ,\beta)=1$, we obtain  ${\rm
 {id}}_{V_{\beta}}=   \varphi_{\beta}
 \vert_{V_{\beta}} $.

\begin{figure}[h, t]
    \centering
        \includegraphics[width=0.70\textwidth]{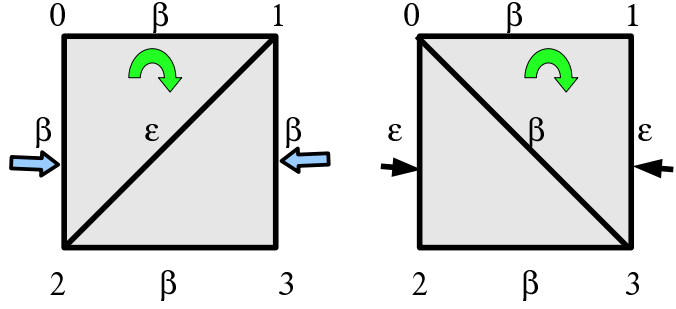}
    \caption{Proof of the identity $\varphi_{\beta} \vert_{V_{\beta}} ={\text
{id}}_{V_{\beta}}$}
    \label{fig:dehn}
\end{figure}

The equality $\varphi_\beta (1_V)=1_V$  follows from the fact that
the $X$-disk  obtained by glueing the $X$-disk ${{B}}$ to the bottom
of $C_{-+}(\varepsilon,\beta)$
 is h-equivalent to ${{B}}$.

To prove the identity \eqref{com}, consider   the
  $X$-surfaces (disks with two holes) in Figure \ref{fig:bapba} (we
   omit  in the figure the collar rectangles glued to   some  external edges; they play no role in the argument and
    the reader may easily recover them).
  The first
$X$-surface is obtained by glueing  the $X$-annulus $C'
(\alpha,\beta)$ to the right bottom boundary component $(S^1,
{u}_\alpha)$ of ${{D}}(\beta, \alpha)$. Since
$\kappa(\varepsilon,\beta,\alpha)=1$, this $X$-surface is
h-equivalent to the second $X$-surface in Figure \ref{fig:bapba}.
The third $X$-surface is obtained from the second one by separating
the  left face along the $\varepsilon$-labeled edge $v_0v_2$ and
gluing it on the right along the edges  indicated by the external
arrows. Thus, the second and third $X$-surfaces are h-homeomorphic.
The map from the third $X$-surface to $X$ can be easily computed
because its restriction to the face $v_1v_3v_4$ (respectively to
$v_0v_2v_4$) is the  constant homotopy   of $\beta$ to itself
  (respectively, of $\beta\alpha$ to itself).
This $X$-surface is h-equivalent to ${{D}}(\beta\alpha\beta^{-1},
\beta)$. Applying $\tau$, we obtain  $ v \varphi_{\beta}^{-1}(w)=wv$
for all $v\in V_{\beta}$ and $w\in V_{\beta\alpha\beta^{-1}}$. This
is an equivalent form of \eqref{com}.

\begin{figure}[h, t]
    \centering
        \includegraphics[angle=0,width=0.6\textwidth]{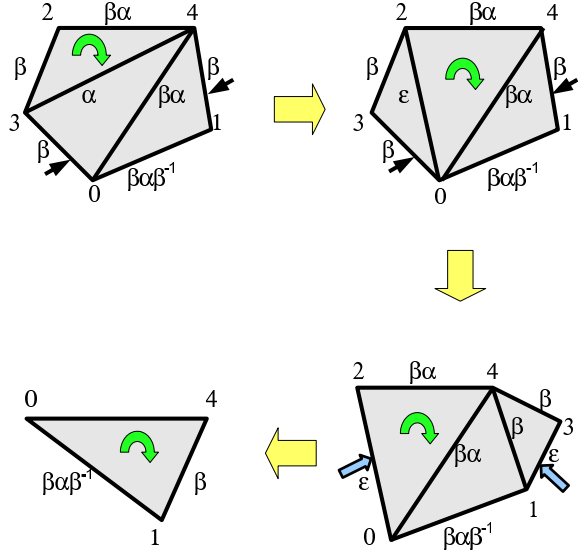}
    \caption{Proof of the identity $v\varphi_{\beta}^{-1}(w)=wv$}
    \label{fig:bapba}
\end{figure}

The identity $\eta_{\varepsilon}(\varphi_{\beta}(u) \otimes
\varphi_{\beta}(v))=\eta_{\varepsilon}(u\otimes v)$ for   $u,v\in
V_\varepsilon$ follows from the fact that the $X$-annulus obtained
by the glueing of $C_{-+}(\varepsilon,\beta)\amalg
C_{-+}(\varepsilon,\beta)$ to the bottom of
${{C_{--}}}(\varepsilon,\varepsilon)$  is h-equivalent to
${{C_{--}}}(\varepsilon,\varepsilon)$   (see Figure
\ref{fig:dualphig}).

\begin{figure}[h,t]
    \centering
        \includegraphics[angle=0,width=0.4\textwidth]{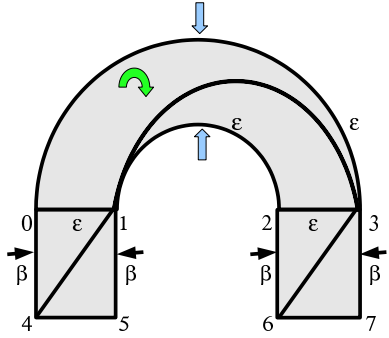}
    \caption{Proof of the identity $\eta_{\varepsilon}(\varphi_{\beta}(u) \otimes \varphi_{\beta}(v))=\eta_{\varepsilon}(u \otimes v)$}
    \label{fig:dualphig}
\end{figure}

To prove that $$\varphi_{\beta}(u)\, \varphi_{\beta}(v) =l_{\alpha,\gamma}^{\beta} \, \varphi_{\beta}(uv)$$
 for all $u\in V_{\alpha}$ and $v\in V_{\gamma}$, consider the $X$-disks with three holes $W_1, W_2, W_3, W_4$
  in   Figure \ref{fig:proab}
 (again we omit the collar rectangles glued to some external edges).
Clearly, the homomorphism $\tau(W_1): V_{\alpha} \otimes
V_{\gamma}\to V_{\beta \alpha\gamma
 \beta^{-1}}$ carries
 $  u\otimes v$ to $\varphi_{\beta}(u)\,
 \varphi_{\beta}(v)$.
The   $X$-surface $W_2$ differs from $W_1$ by   two adjacent
triangles $v_0v_{1'} v_5$ and $v_0v_{1''} v_5$ both representing
$f_{\beta \alpha  \beta^{-1}, \beta \gamma \beta^{-1}}:\Delta_2\to
X$. These two copies of  $f_{\beta \alpha  \beta^{-1}, \beta \gamma
\beta^{-1}}$  \lq\lq cancel" each other, and so  $W_2$ is
h-equivalent to~$W_1$. Thus,  $\tau(W_2)=\tau(W_1)$. The $X$-surface
$W_3$ is obtained from $W_2$ by removing the vertices $v_{1'} $ and
$ v_{1''}$. Hence
$$\tau(W_3)=  \kappa(\beta\alpha\beta^{-1},\beta\gamma\beta^{-1},\beta)^{-1} \,\kappa(\beta\alpha\beta^{-1},\beta,\gamma)\, \tau(W_2).$$
The $X$-surface $W_4$ is obtained from $W_3$ by switching the
diagonal in the quadrilateral $v_0v_3v_4v_5$. Therefore
$\tau(W_4)=\kappa(\beta, \alpha,\gamma)^{-1}  \, \tau(W_3)$.
Combining these formulas, we obtain
 $\tau(W_4) =(l_{\alpha,\gamma}^{\beta})^{-1} \, \tau(W_1) $. It remains to observe that
the homomorphism $\tau(W_4): V_{\alpha} \otimes  V_{\gamma}\to
V_{\beta \alpha\gamma
 \beta^{-1}}$ carries
 $  u\otimes v$ to $\varphi_{\beta}(uv)$.

\begin{figure}[h,t]
    \centering
        \includegraphics[width=0.7\textwidth]{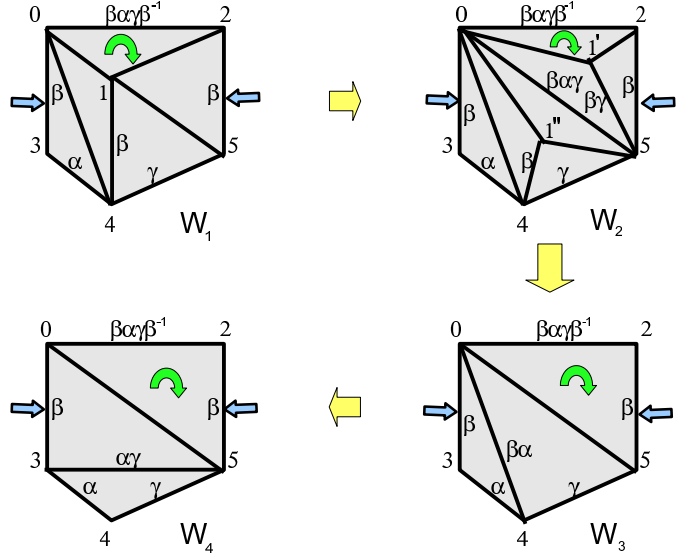}  \caption{Proof of the identity $\varphi_{\beta}(u)\, \varphi_{\beta}(v) =l_{\alpha,\gamma}^{\beta} \, \varphi_{\beta}(uv)$}
    \label{fig:proab}
\end{figure}

To show the identity $\varphi_{\gamma\beta}\vert_{V_\alpha}
=h_{\gamma,\beta}^{\alpha}\, \varphi_{\gamma}
\varphi_{\beta}\vert_{V_\alpha}$, consider the $X$-annuli $W_1$,
$W_2$, $W_3$, $ W_4$ in  Figure \ref{fig:phigb}. Clearly,
$W_1=C_{-+}(\alpha,\gamma\beta)$ and therefore $\tau(W_1)=
\varphi_{\gamma\beta} \vert_{V_\alpha}$. The $X$-annulus $W_2$ is
obtained from $W_1$ by adding two vertices $v_{2} $ and $ v_{3}$.
Hence
$$\tau(W_2)=  \kappa(\gamma\beta\alpha\beta^{-1}\gamma^{-1},\gamma,\beta)^{-1}\, \kappa(\gamma,\beta, \alpha)^{-1}\, \tau(W_1).$$
The $X$-annulus $W_3$ is obtained from $W_2$ by switching the
diagonal in the quadrilateral $v_0v_2v_5v_3$. Therefore $\tau(W_3)=
 \kappa(\gamma,\beta\alpha\beta^{-1},\beta) \,\tau(W_2)$. Finally,
 $W_4$ is obtained from $W_3$ by canceling two adjacent copies of the
 singular triangle $f_{\gamma, \beta}$. Therefore $W_4$ is
  h-equivalent to $W_3$ and $\tau(W_4)=\tau(W_3)$. It is clear  that $\tau(W_4)= \varphi_{\gamma} \varphi_{\beta} \vert_{V_\alpha}$.
   Combining these
  equalities, we obtain the required identity.

\begin{figure}[h,t]
    \centering
        \includegraphics[angle=0,width=0.7\textwidth]{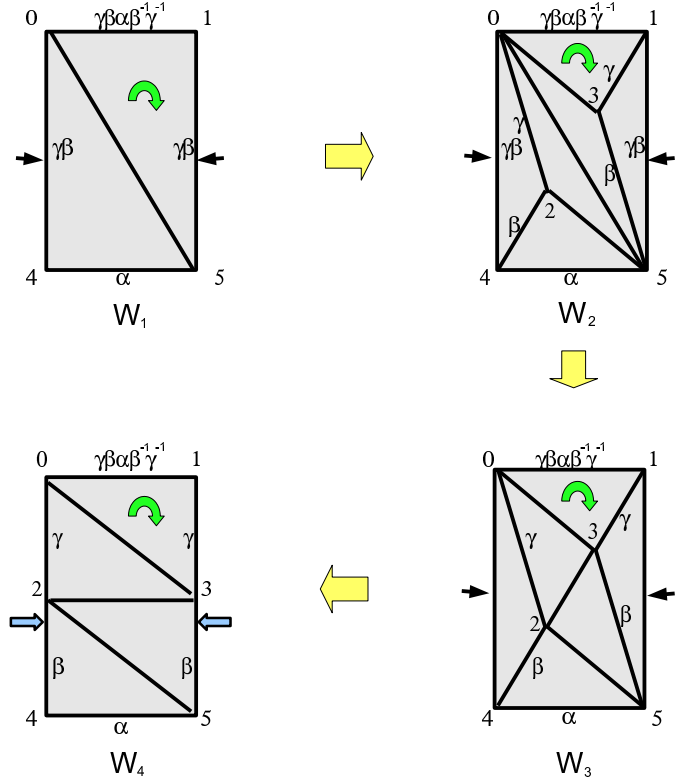}
    \caption{Proof of the identity $\varphi_{\gamma\beta}(u)= h_{\gamma,\beta}^{\alpha}\, \varphi_{\gamma}(\varphi_{\beta}(u))$}
    \label{fig:phigb}
\end{figure}

Step 5 ({\it The trace condition}). Consider the three  $X$-surfaces
(punctured tori) $W_1, W_2, W_3$   in Figure \ref{fig:trace}. All
three are $X$-cobordisms between $(S^1, u_{\gamma})$ and
$\emptyset$, where $\gamma= \alpha\beta\alpha^{-1}\beta^{-1}$.
Clearly,
$$\tau(W_1)=\kappa(\gamma,\beta\alpha\beta^{-1},\beta)\,
\tau(W_2) \quad {\text {and}} \quad
\tau(W_3)=\kappa(\gamma,\beta,\alpha)\, \tau(W_2).$$
Consider the $X$-surface obtained by glueing  $C_{-+}(\alpha,\beta)$ and  ${{D}}(\alpha,\gamma)$. If we identify the two copies of $(S^1, {u}_\alpha)$ we obtain $W_1$. Similarly for the $X$-surface obtained by glueing ${{D}}(
\gamma,\beta)$ and $C'(\beta,\alpha)$ we can  identify the two copies of $(S^1, {u}_{\beta})$ in order to obtain $W_3$. This implies the equality
$${\rm {Tr}}(\kappa(\gamma,\beta,\alpha)\mu_c\varphi_{\beta}:V_{\alpha}\to V_{\alpha})= {\rm {Tr}}(\kappa(\gamma,\beta\alpha\beta^{-1},\beta)
\varphi_{\alpha}^{-1}\mu_c:V_{\beta}\to V_{\beta}).
$$
\begin{figure}[h, t]
    \centering
        \includegraphics[angle=0,width=0.7\textwidth]{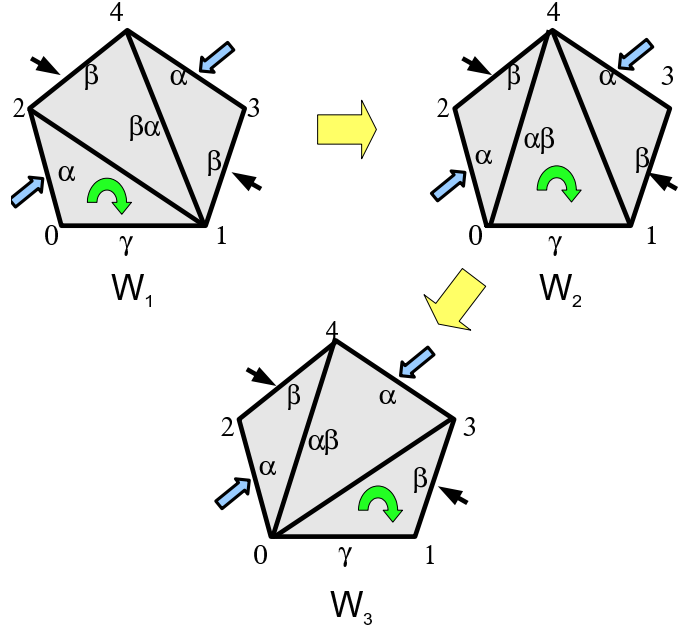}
    \caption{Proof of the trace condition}
    \label{fig:trace}
\end{figure}

We summarize the results above in the following theorem.

\begin{theorem}
To every 2-dimensional $X$-HQFT  $(V,\tau)$   and to every basic
system of loops and triangles $\Omega=(\{{u}_{\alpha}\}_{\alpha \in
G},\{{f}_{\alpha, \beta}\}_{\alpha, \beta\in G})$ in $X$, we
associate a TF-algebra $V^{\Omega}=V^{\Omega}(\tau)$ over the triple
$(G=\pi_1(X),A=\pi_2(X),\kappa^\Omega:G^3\to A)$.
\end{theorem}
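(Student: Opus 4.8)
The plan is to realize each piece of the TF-algebra structure as the operator $\tau$ applied to a concrete $X$-surface, and then to verify every axiom by producing an h-equivalence (or a connected-sum comparison) between the surfaces that compute the two sides of the identity in question. The governing principle, recorded just before the theorem, is that h-equivalent $X$-surfaces with the same bases induce the same homomorphism under $\tau$, while replacing an $X$-surface by its connected sum with a map $S^2\to X$ representing $a\in A$ multiplies $\tau$ by $a$. Thus every cocycle factor $\kappa(\cdots)$ in the axioms will be produced geometrically, from comparing the maps to $X$ carried by combinatorially equivalent surfaces.

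First I would fix the data. Set $V_\alpha=V_{(S^1,u_\alpha)}$, so that finite-dimensionality and the $G$-grading are built in; define the $A$-action by gluing the sphere-bubble annulus $S(\alpha,a)$, multiplication by the pair-of-pants $D(\alpha,\beta)$, the unit by the disk $B$, the inner product by the annulus $C_{--}(\alpha,\varepsilon)$, and $\varphi_\beta$ by the annulus $C_{-+}(\alpha,\beta)$. The identity $au={}^\alpha a\,u$, the $K[A]$-bilinearity of multiplication, the unit axiom, and the symmetry of $\eta_\varepsilon$ are then each settled by a single h-equivalence of surfaces together with the gluing axiom; the non-degeneracy of $\eta_\alpha$ follows from a standard zig-zag h-equivalence expressing the identity of $V_{\alpha^{-1}}$ as a composition involving $\eta_\alpha$.

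The substantive verifications are those in which $\kappa$ appears. For twisted associativity I would compare the disk-with-three-holes obtained by stacking $D(\alpha,\beta)$ onto $D(\alpha\beta,\gamma)$ with the one obtained by stacking $D(\beta,\gamma)$ onto $D(\alpha,\beta\gamma)$: the two triangulate the same surface differently, and the difference of the two maps to $X$ is exactly the map $\partial\Delta_3\to X$ defining $\kappa(\alpha,\beta,\gamma)$, so the connected-sum rule yields \eqref{associativity}. For the projective-action identities $\varphi_\beta(u)\,\varphi_\beta(v)=l_{\alpha,\gamma}^\beta\,\varphi_\beta(uv)$ and $\varphi_{\gamma\beta}=h_{\gamma,\beta}^\alpha\,\varphi_\gamma\varphi_\beta$, I would run through a chain of surfaces $W_1,\dots,W_4$ in which each step is one of three elementary moves: canceling a pair of adjacent, identically-mapped triangles (an h-equivalence, contributing no factor), deleting an interior vertex (which contributes a pair of $\kappa$-factors via the defining relation of $\kappa$), and switching the diagonal of a quadrilateral (which contributes one further $\kappa$-factor). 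Assembling the factors is designed to reproduce precisely the closed forms $l_{\alpha,\gamma}^\beta$ and $h_{\gamma,\beta}^\alpha$.

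The last and most delicate point is the trace condition. Here I would exhibit a single punctured-torus $X$-surface and cut it along two different curves: cutting one way presents it as $C_{-+}(\alpha,\beta)$ glued to $D(\alpha,\gamma)$ with the two copies of $(S^1,u_\alpha)$ identified, computing $\mathrm{Tr}(\mu_c\varphi_\beta:V_\alpha\to V_\alpha)$ up to a $\kappa$-factor, while cutting the other way presents it as $D(\gamma,\beta)$ glued to $C'(\beta,\alpha)$ with the two copies of $(S^1,u_\beta)$ identified, computing $\mathrm{Tr}(\varphi_\alpha^{-1}\mu_c:V_\beta\to V_\beta)$ up to another $\kappa$-factor; equality of the two values of $\tau$ on one surface then gives \eqref{tr}. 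I expect the main obstacle throughout to be the bookkeeping: choosing the basic system $\Omega$ consistently so that the glued triangles $f_{\alpha,\beta}$ match along shared edges, and checking that the cocycle factors accumulated from vertex deletions and diagonal switches agree \emph{on the nose} with $l_{\alpha,\gamma}^\beta$, $h_{\gamma,\beta}^\alpha$, and the trace-condition constants, rather than merely up to a coboundary.
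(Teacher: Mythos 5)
Your proposal matches the paper's proof essentially step for step: the same assignment of $X$-surfaces ($S(\alpha,a)$, $D(\alpha,\beta)$, $B$, $C_{--}$, $C_{-+}$) to the structural data, the same h-equivalence and connected-sum arguments for the axioms, the same chain $W_1,\dots,W_4$ of elementary moves (triangle cancellation, vertex removal, diagonal switch) for the identities involving $l_{\alpha,\gamma}^{\beta}$ and $h_{\gamma,\beta}^{\alpha}$, and the same two decompositions of a punctured torus for the trace condition. The approach and the anticipated bookkeeping issues are exactly those of the paper.
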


It is obvious that the   construction of $V^{\Omega} $ is functorial
with respect to isomorphisms of $X$-HQFTs. We now show that
$V^{\Omega} $ does not depend on $\Omega$ at least up to coboundary
equivalence (see Section~1.4).

\begin{lemma}   The   TF-algebra $V^{\Omega} $ considered  up to coboundary
equivalence
does not depend on the choice of
 $\Omega$.\label{omega}
\end{lemma}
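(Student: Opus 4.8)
The plan is to compare two basic systems $\Omega=\{\{u_\alpha\},\{f_{\alpha,\beta}\}\}$ and $\Omega'=\{\{u'_\alpha\},\{f'_{\alpha,\beta}\}\}$ through two independent moves: changing the triangles while keeping the loops fixed, and changing the loops. I expect the first move to produce \emph{exactly} a coboundary transformation in the sense of Section~\ref{kprime}, and the second to produce a genuine isomorphism of TF-algebras. Since $\kappa^\Omega$ and $\kappa^{\Omega'}$ both represent the $k$-invariant of $X$, they are cohomologous, so combining the two moves will exhibit $V^{\Omega'}$ as coboundary equivalent to $V^\Omega$.

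\emph{First move (same loops, different triangles).} Assume $u'_\alpha=u_\alpha$ for all $\alpha$. Then $f_{\alpha,\beta}$ and $f'_{\alpha,\beta}$ share the same boundary, so gluing them along it yields a map $S^2\to X$ whose class (for a fixed orientation convention) is denoted $\omega(\alpha,\beta)\in A$; the normalizations of $\Omega,\Omega'$ make $\omega$ a normalized $2$-cochain. Inspecting the four faces of the map $\partial\Delta_3\to X$ defining $\kappa$ (Figure~\ref{fig:kinvd}), replacing each $f$ by $f'$ multiplies the resulting class by the alternating contribution of the four difference-spheres, which is precisely $\delta^2(\omega)(\alpha,\beta,\gamma)$; hence $\kappa^{\Omega'}=\delta^2(\omega)\,\kappa^\Omega$. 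Now I re-run the constructions of Section~3 figure by figure. The spaces $V_\alpha$ and the $A$-action are unchanged, since $S(\alpha,a)$ uses no triangle. The surface $D(\alpha,\beta)$ acquires the sphere $\omega(\alpha,\beta)$, so the new product is $\omega(\alpha,\beta)^{-1}u\cdot v$; the annulus $C_{--}(\varepsilon,\varepsilon)$ is unchanged, so $\eta_\varepsilon$ is unchanged; and the annulus $C_{-+}(\alpha,\beta)$ acquires the two spheres attached to the faces $f_{\beta,\alpha}$ and $f_{\beta\alpha\beta^{-1},\beta}$, so $\varphi_\beta|_{V_\alpha}$ is multiplied by $\omega(\beta,\alpha)^{-1}\omega(\beta\alpha\beta^{-1},\beta)$. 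These are exactly the formulas defining $(V^\Omega)^\omega$ in Section~\ref{kprime}; thus $V^{\Omega'}=(V^\Omega)^\omega$ as TF-algebras over $(G,A,\kappa^{\Omega'})$.

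\emph{Second move (change of loops).} For arbitrary $\Omega'$, choose for each $\alpha$ a homotopy $h_\alpha$ rel basepoint from $u_\alpha$ to $u'_\alpha$, with $h_\varepsilon$ constant. The cylinder over $S^1$ carrying $h_\alpha$ is an $X$-cobordism from $(S^1,u_\alpha)$ to $(S^1,u'_\alpha)$, and by Axioms~(7) and~(8) the map $\psi_\alpha:=\tau$ of this cylinder is an isomorphism $V_\alpha\to V_{(S^1,u'_\alpha)}$. Using the $h_\alpha$ I transport the triangles, defining triangles $\tilde f_{\alpha,\beta}$ with loops $u'$ that are homotopic, through the $h$'s, to $f_{\alpha,\beta}$; call the resulting system $\tilde\Omega$. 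Because the $h_\alpha$ are rel basepoint, the map $\partial\Delta_3\to X$ defining $\kappa^{\tilde\Omega}$ is homotopic to the one defining $\kappa^{\Omega}$, so $\kappa^{\tilde\Omega}=\kappa^{\Omega}$. Sliding each $h_\alpha$ into the defining surfaces $S(\alpha,a)$, $D(\alpha,\beta)$, $C_{--}$, $C_{-+}$ and invoking Axiom~(8) shows that the $\psi_\alpha$ intertwine all five pieces of structure; hence $\{\psi_\alpha\}$ is an isomorphism $V^\Omega\xrightarrow{\ \cong\ }V^{\tilde\Omega}$ over the common cocycle.

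Combining the moves finishes the proof. Given arbitrary $\Omega,\Omega'$, the second move produces $\tilde\Omega$ carrying the loops of $\Omega'$ together with an isomorphism $V^\Omega\cong V^{\tilde\Omega}$; since $\tilde\Omega$ and $\Omega'$ share the same loops, the first move gives $V^{\Omega'}=(V^{\tilde\Omega})^{\omega}$ for a normalized $2$-cochain $\omega$. Thus $V^{\Omega'}$ is obtained from $V^\Omega$ by an isomorphism followed by a coboundary transformation, so the two are coboundary equivalent. The main obstacle is the first move: one must verify that each of the five structures of Section~\ref{frof} picks up exactly the sphere factors prescribed by Section~\ref{kprime}, including the relations \eqref{pd} and \eqref{pgb} for $\varphi_\beta$ and the trace condition \eqref{tr}. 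This is bookkeeping with the figures of Section~3 --- each triangle $f_{\alpha,\beta}$ occurring in a surface contributes a factor $\omega(\alpha,\beta)^{\pm1}$ according to its orientation --- but it is where all the content lies.
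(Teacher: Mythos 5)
Your proposal is correct and follows essentially the same route as the paper: first the case of equal loops, where the difference of triangles yields a normalized $2$-cochain $\omega$ with $\kappa'=\delta^2(\omega)\kappa$ and $V^{\Omega'}=(V^{\Omega})^{\omega}$, and then the general case reduced to it by transporting the triangles along homotopies $h_\alpha$ of the loops, with the cylinder maps $\tau(h_\alpha)$ furnishing the isomorphism onto the intermediate system. The only detail the paper adds that you elide is that the transported triangles $\tilde f_{\varepsilon,\alpha}$, $\tilde f_{\alpha,\varepsilon}$ need not be the standard constant homotopies and must first be deformed rel $\partial\Delta^2$ to make $\tilde\Omega$ a genuine basic system.
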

\begin{proof}
Let $\Omega'=(\{u'_{\alpha}\}_{\alpha \in G},\{{f}'_{\alpha,
\beta}\}_{\alpha, \beta\in G})$ be another basic system of loops and
triangles in~$X$. Assume first that $u_{\alpha}=u'_{\alpha}$ for all
$\alpha\in G$. It is clear  that up to homotopy constant
on~$\partial \Delta^2$  the map ${f}_{\alpha, \beta}: \Delta^2 \to
X$ is a connected sum of  ${f}'_{\alpha, \beta}: \Delta^2 \to X$ and
a certain map $\omega(\alpha,\beta):    \Delta^2 \to X$ carrying
$\partial \Delta^2$ to $x_0$. Assigning to each pair $(\alpha,
\beta)\in G^2$  the element of $A=\pi_2(X,x_0)$ represented by
$\omega(\alpha,\beta)$, we obtain a 2-cochain $\omega:G^2\to A$.
This cochain is normalized because by the definition of a basic
system of loops and triangles,   $f'_{\alpha, \varepsilon}=
f_{\alpha, \varepsilon}$ and $f'_{ \varepsilon, \alpha}= f_{
\varepsilon, \alpha}$ for all $\alpha\in G$.    A direct comparison
of the
 $X$-surfaces used to define the 3-cocycles
$\kappa,\kappa':G^3\to A$ associated with $\Omega, \Omega'$ and the
TF-algebras $V^{\Omega} , V^{\Omega'} $ shows that
$\kappa'=\delta^2(\omega)\kappa$ and
  $V^{\Omega'} $
is obtained from $V^{\Omega} $ by the coboundary transformation
determined by $\omega$, i.e.,
 $V^{\Omega'} \cong (V^{\Omega} )^{\omega}$.

In the   case   $u_{\alpha}\neq u'_{\alpha}$ for some $\alpha$, we
construct a third basic system of loops and triangles   in~$X$ as
follows.
 For each $\alpha\in G$,  fix a homotopy $h_\alpha:S^1\times [0,1]\to X$ between the loops
 $u_{\alpha}, u'_{\alpha}:S^1 \to X$ representing $\alpha$. (It is understood that $h_\varepsilon$ is a constant map to $x_0$.) Recall the map $p:[0,1]\to S^1$ from Section~2.2.
 The
 map $$\overline h_\alpha=h_\alpha \circ (p\times {\rm {id}}_{[0,1]}): [0,1]^2\to X$$ is a homotopy between
  $u_\alpha p$ and $u'_\alpha
 p$.
 We construct a map
${f}^h_{\alpha, \beta} :\Delta^2\to X$ by gluing   $ \overline
h_\alpha, \overline h_\beta, \overline h_{\alpha\beta}$ to the sides
of the singular simplex ${f}_{\alpha, \beta}:\Delta^2\to X$. The
system $\Omega^h= (\{u'_{\alpha}\}_{\alpha\in  G},\{{f}^h_{\alpha,
\beta}\}_{\alpha, \beta\in  G})$ satisfies all conditions of  a
basic system of loops and triangles in~$X$ except possibly one: the
maps ${f}^h_{ \varepsilon, \alpha}$ and ${f}^h_{\alpha,
\varepsilon}$
   are not necessarily the standard
\lq\lq constant" homotopies between two copies of~$u'_{\alpha}p$.
However,   ${f}^h_{ \varepsilon, \alpha}$ and ${f}^h_{\alpha,
\varepsilon}$  are homotopic rel~$\partial \Delta^2$ to these
constant homotopies, as easily follows from the assumptions on
${f}_{ \varepsilon, \alpha}$ and ${f}_{\alpha,  \varepsilon}$ and
the definition of $f^h$. Therefore, deforming if necessary the maps
${f}^h_{ \varepsilon, \alpha}$ and ${f}^h_{\alpha, \varepsilon}$, we
can transform   $ \Omega^h$ into a basic system $\Omega''$ of loops
and triangles in~$X$.   The associated  3-cocycle $\kappa'':G^3\to
A$ is equal to $\kappa$.
  Moreover, the   homomorphisms
$$\{\tau(h_\alpha):V_{u_{\alpha}}\to V_{u'_{\alpha}}\}_{\alpha\in G}$$ form
 an isomorphism   $V^{\Omega} \to V^{\Omega''} $  in the category of  TF-algebras over $(G,A,\kappa)$.
 By the argument above, the TF-algebra $V^{\Omega'} $ is
 obtained from $V^{\Omega''} $ by a coboundary
 transformation. This completes the proof of the lemma.
\end{proof}

The following theorem shows that the isomorphism class of a
2-dimensional $X$-HQFT is entirely determined by the underlying
TF-algebra.

\begin{theorem}\label{polO}
Two 2-dimensional $X$-HQFTs are isomorphic if and only if their
underlying TF-algebras are coboundary equivalent.
\end{theorem}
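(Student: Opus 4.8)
The two implications are of quite different natures, so I treat them separately. For the ``only if'' direction, fix a basic system of loops and triangles $\Omega$. As noted after the preceding theorem, the assignment $\tau\mapsto V^\Omega(\tau)$ is functorial with respect to isomorphisms of $X$-HQFTs. Hence an isomorphism $(V,\tau)\cong(V',\tau')$ induces an isomorphism $V^\Omega(\tau)\cong V^\Omega(\tau')$ of TF-algebras over the common triple $(G,A,\kappa^\Omega)$, and an isomorphism of TF-algebras is a fortiori a coboundary equivalence. Combined with Lemma~\ref{omega}, which guarantees that the coboundary-equivalence class does not depend on $\Omega$, this shows that isomorphic HQFTs have coboundary equivalent underlying TF-algebras.

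For the converse I would first reduce a coboundary equivalence to an honest isomorphism. Since $V^\Omega(\tau)$ and $V^\Omega(\tau')$ both lie over $\kappa^\Omega$, and since coboundary transformations are invertible, natural, and compose by $\big(V^\omega\big)^{\omega'}=V^{\omega\omega'}$, any chain of isomorphisms and coboundary transformations joining them collapses to a single coboundary transformation followed by an isomorphism. The net cochain $\omega_0$ must satisfy $\delta^2(\omega_0)=1$ in order to return to $\kappa^\Omega$, so $\omega_0$ is a normalized $2$-cocycle and $V^\Omega(\tau')\cong\big(V^\Omega(\tau)\big)^{\omega_0}$. By the construction in the proof of Lemma~\ref{omega}, every normalized $2$-cochain is realized by a change of basic system with the same loops $u_\alpha$; applying this to $\omega_0$ yields a basic system $\Omega''$ with $\kappa^{\Omega''}=\kappa^\Omega$ and $\big(V^\Omega(\tau)\big)^{\omega_0}\cong V^{\Omega''}(\tau)$. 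We are thus left with an isomorphism of TF-algebras $f\colon V^{\Omega''}(\tau)\to V^\Omega(\tau')$ associated with the same loops and the same cocycle, and it suffices to promote $f$ to an isomorphism of HQFTs.

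To build the HQFT isomorphism $\rho=\{\rho_M\}$, I would set $\rho_{(S^1,u_\alpha)}=f|_{V_\alpha}$ on the standard curves, extend to disjoint unions of standard curves by tensor products, and extend to an arbitrary $X$-curve by transporting along the cylinder isomorphisms attached to a homotopy of each component onto a standard curve. Independence of these choices and naturality with respect to $X$-homeomorphisms follow because $f$ commutes with the maps $\varphi_\beta=\tau(C_{-+}(\alpha,\beta))$, which implement the conjugation ambiguity in the choice of representing loop, and because $f$ is compatible with the cylinder isomorphisms coming from homotopies of the maps to $X$ (here axiom (8) is used). The remaining point is that $\rho$ intertwines $\tau$ and $\tau'$ on every $X$-cobordism.

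This last point is the \emph{crux} of the proof. The key input is a decomposition statement: every $2$-dimensional $X$-surface is h-equivalent to a composition of the elementary $X$-cobordisms produced in Section~3 --- the cap $B$, the ``pair of pants'' $D(\alpha,\beta)$ and its dual, the pairing $C_{--}(\alpha,\varepsilon)$ and copairing, and the annuli $C_{-+}(\alpha,\beta)$ --- glued together with cylinders carrying homotopies of the maps to $X$. I would obtain this from a Morse--Cerf decomposition of the surface, arranged so that generic level curves map to $X$ by loops homotopic to the chosen $u_\alpha$ and so that each critical point contributes one of the listed pieces; only the homotopy classes rel boundary of the maps to $X$ matter, by axiom (8). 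Granting the decomposition, the gluing axiom (6) reduces the intertwining relation to each elementary piece, and for every such piece the required commutativity is precisely one of the defining properties of a TF-algebra isomorphism --- preservation of multiplication, unit, inner product, and the maps $\varphi_\beta$. Consequently $\rho$ is an isomorphism of $X$-HQFTs, as desired. The main obstacle is exactly establishing this decomposition, i.e.\ showing that the listed cobordisms generate the $2$-dimensional $X$-cobordism category; this is an $X$-equivariant refinement of the classification of surfaces and requires careful bookkeeping of base points, orientations, and the conjugation data encoded by the $\varphi_\beta$.
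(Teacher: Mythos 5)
Your proposal is correct and follows essentially the same route as the paper: reduce the coboundary equivalence to an honest isomorphism of TF-algebras over the common cocycle $\kappa^{\Omega}$, transport it from the standard curves $(S^1,u_\alpha)$ to arbitrary $X$-curves via connecting annuli, and verify the intertwining by splitting every $X$-surface into the elementary pieces $B$, $S(\alpha,a)$, $D(\alpha,\beta)$, $C_{\pm\pm}$ of Section~3 and invoking the gluing axiom. The decomposition you flag as the crux is exactly the pants-decomposition step the paper also relies on (and treats equally briefly); your extra care in collapsing the chain of coboundary transformations to a single cocycle realized by a change of basic system is a slightly more explicit version of the paper's assertion that an isomorphism $V_1^{\Omega}\to V_2^{\Omega}$ exists.
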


\begin{proof}  We fix a basic system $\Omega$ of loops and triangles in $X$. It is obvious that an  isomorphism of  $X$-HQFTs $\rho:(V_1,\tau_1)\to (V_2,\tau_2)$ induces an isomorphism
 $V_1 ^{\Omega}\simeq V_2 ^{\Omega}$ of  TF-algebras over $(\pi_1(X),\pi_2(X),\kappa^{\Omega})$. Therefore, the underlying  TF-algebras are coboundary equivalent.

Conversely, assume that we have two $X$-HQFTs $(V_1,\tau_1)$ and $(V_2,\tau_2)$ such that their underlying TF-algebras are coboundary equivalent.  If we use the same basic system $\Omega=(\{{u}_{\alpha}\}_{\alpha \in
\pi_1(X)},\{{f}_{\alpha, \beta}\}_{\alpha, \beta\in \pi_1(X)})$ of loops and triangles in $X$, then the
two  algebras in question are TF-algebras over the same triple
$(\pi_1(X),\pi_2(X),\kappa^{\Omega})$. Since they  are coboundary equivalent, there is
an isomorphism $\rho:V_1^{\Omega}\to V_2 ^{\Omega}$. We lift $\rho$ to an isomorphism
of the HQFTs.  For every $\alpha\in \pi_1(X)$, denote by $\rho_{\alpha}$ the restriction of $\rho$ to  $(V_1^{\Omega})_{\alpha}=(V_1)_{(S^1,u_{\alpha})}$. For
every connected $X$-curve $(M,g_M)$, there are $\alpha\in \pi_1(X)$, an  $X$-annuli $a=a_M:[0,1]\times S^1 \to X$, and an $X$-homeomorphism $h=h_M:(S^1,a_M\vert_{1\times S^1})\to (M,g_M)$ such that
$$a_M \vert_{0\times S^1}= u_{\alpha}:S^1\to X \; \; {\rm and} \; \; a_M([0,1]\times \{*\})= x_0,$$
where $*$ is the base point of $S^1$ and $x_0$ is the base point of $X$. Notice that $\alpha$ is uniquely determined by $(M,g_M)$. We define $\rho_M:(V_1)_M\to (V_2)_M$ by the formula
$$\rho_M=h_{\#_2}\tau_2(a)\rho_{\alpha}\tau_1(a)^{-1}h_{\#_1}^{-1},$$
where $h_{\#_i}:(V_i)_{(S^1,a\vert_{1\times S^1})}\to (V_i)_{(M,g_M)}$ is the $K$-isomorphism associated to $h$ by the HQFT  $(V_i,\tau_i)$. Next we show that $\rho_M$ does not depend on the choice of $a$ and $h$. Indeed, take another pair $\hat{a}$, $\hat{h}$  which satisfies the above properties. Consider $a$ (respectively $\hat{a}$) as an $X$-cobordism between $(S^1,u_{\alpha})$ and $(S^1,a\vert_{1\times S^1})$ (respectively $(S^1,\hat{a}\vert_{1\times S^1})$) and observe that $\hat{h}^{-1}h$ is an $X$-homeomorphism between the top bases of these $X$-cobordisms. Let $C$ be the $X$-cobordism obtained by glueing these two along $\hat{h}^{-1}h$ (the orientation in the second cobordism should be reversed).
Then $C$ is a cobordism   between two copies of $(S^1,u_{\alpha})$.  Clearly, $C$ is the trivial $X$-annuli (as in axiom (7) of an HQFT) up to h-equivalence and the action of some 
$c\in \pi_2(X)$. Hence $\tau_i(C):(V_i^{\Omega})_{\alpha}\to (V_i^{\Omega})_{\alpha}$ is multiplication by $c$.  Since $\rho$ is an isomorphism of TF-algebras, we have
$$\rho_{\alpha}\tau_1(C)=\tau_2(C)\rho_{\alpha}$$
or equivalently
$$\rho_{\alpha}\tau_1(\hat{a})^{-1}(\hat{h}^{-1}h)_{\#_1}\tau_1(a)=\tau_2(\hat{a})^{-1}(\hat{h}^{-1}h)_{\#_2}\tau_2(a)\rho_{\alpha}.$$
This implies that
$$\hat{h}_{\#_2}\tau_2(\hat{a})\rho_{\alpha}\tau_1(\hat{a})^{-1}\hat{h}_{\#_1}^{-1}=h_{\#_2}\tau_2(a)\rho_{\alpha}\tau_1(a)^{-1}h_{\#_1}^{-1}.$$
So $\rho_M$ does not depend on the choice of $a$ and $h$. Denote by $\overline{\rho}$  the family of $K$-isomorphisms $\{\rho_M:(V_1)_M\to (V_2)_M\}_M$  where $M$ runs over all $X$-curves. We claim that $\overline{\rho}$ is
an isomorphism of HQFT's.

For any $X$-homeomorphism $f:M\to N$, we have
\begin{eqnarray*}
f_{\#_2}\rho_M&=&f_{\#_2}(h_M)_{\#_2}\tau_2(a_M)\rho_{\alpha}\tau_1(a_M)^{-1}(h_M)_{\#_1}^{-1}\\
&=&(f\circ h_M)_{\#_2}\tau_2(a_M)\rho_{\alpha}\tau_1(a_M)^{-1}(f\circ h_M)_{\#_1}^{-1}f_{\#_1}\\
&=&\rho_Nf_{\#_1}.
\end{eqnarray*}
So, $\overline{\rho}$ is natural with respect to $X$-homeomorphisms.

Next, notice that  any compact oriented $X$-surface splits along a finite family
of disjoint simple loops into a disjoint union of  $X$-disks with at most  two holes. Deforming if necessary the map from the surface to $X$, we can choose the splitting loops to be
$X$-curves $X$-homeomorphic to  $(S^1,u_{\alpha})$, for some $\alpha\in \pi_1(X)$. Moreover, we can make the cuts in such a way that each component adjacent to the boundary is $X$-homeomorphic to some $a_M$ and all the other $X$-surfaces are of types
\begin{eqnarray}
B,\; S(\alpha,a),\; D(\alpha,\beta),\; C_{-+}(\alpha,\beta),\; C_{--}(\varepsilon,\varepsilon),\; C_{++}(\varepsilon,\varepsilon).
\label{surf}
\end{eqnarray}
We consider the $X$-surface $a_M(id\times h_M^{-1}):[0,1]\times M\to X$ which is  an $X$-cobordism between $M'=(M,u_{\alpha}\circ h_M^{-1})$ and $(M,g_M)$. Axiom (4) of an HQFT implies that for $i=1,2,$
$$\tau_i(a_M(id\times h_M^{-1}))=(h_M)_{\#_i}\tau_i(a_M)(h_M)_{\#_i}^{-1}.$$
From this formula and the definition of $\overline{\rho}$, we have
\begin{eqnarray*}
\rho_M\tau_1(a_M(id\times h_M^{-1}))
&=&(h_M)_{\#_2}\tau_2(a_M)\rho_{\alpha}(h_M)_{\#_1}^{-1}\\
&=&(h_M)_{\#_2}\tau_2(a_M)(h_M)_{\#_2}^{-1}(h_M)_{\#_2}\rho_{\alpha}(h_M)_{\#_1}^{-1}\\
&=&\tau_2(a_M(id\times h_M^{-1}))\rho_{M'}.
\end{eqnarray*}
Therefore the natural square diagram associated to $\overline{\rho}$ and  $a_M(id\times h_M^{-1})$ is commutative. Since $\rho$ is a morphism of TF-algebras,  the natural square diagrams associated with the $X$-surfaces listed in  (\ref{surf}) are commutative. Finally, since any $X$-surface can be obtained by glueing a finite collection of the above $X$-surfaces along $X$-homeomorphisms, we get that the natural square diagrams associated to all $X$-surfaces are commutative. We conclude that $\overline{\rho}$ is an isomorphism of HQFT's.
\end{proof}

We expect that any TF-algebra can be realized by a 2-dimensional
HQFT  which if true, together with
Theorem~\ref{polO} would yield  a complete algebraic
characterization of 2-dimensional HQFTs with arbitrary targets.

  Given a   mapping of pointed topological spaces
 $f:Y\to X$, we can pull back
 a  2-dimensional $X$-HQFT along $f$ to obtain a  2-dimensional $Y$-HQFT. If $f$ induces  isomorphisms in $\pi_1$ and $\pi_2$,
  then   the underlying TF-algebra  of this
 $Y$-HQFT
is isomorphic to the underlying TF-algebra  of the original
 $X$-HQFT.

\section{Simple TF-algebras and     cohomological
HQFTs}\label{uTFC}

In  this section we compute the underlying TF-algebras of  the
   cohomological 2-dimensional HQFTs. We begin by introducing
  a class of  simple TF-algebras.

\subsection{Simple TF-algebras and $\kappa$-pairs} Let
 $G$ be a group with neutral element $\varepsilon$
and  $A$ be a left $G$-module.   Let $\kappa:G^3\to A$ be a
normalized 3-cocycle. A TF-algebra $V=\oplus_{\alpha \in G} \,
V_\alpha$ over $(G,A,\kappa)$ is {\it simple} if ${\text {dim}}_K
(V_{\alpha})=1$ for all $\alpha\in G$. We now classify simple
TF-algebras in terms of so-called $\kappa$-pairs.

Form now on we endow the multiplicative abelian group $K^*$ with the
trivial action of $G$. This allows us to apply notation of Section
\ref{preli} to $K^*$-valued  cochains on $G$. By a {\it
$\kappa$-pair}, we   mean a pair of maps $g_1:G\times G\to K^*$,
$g_2:A\to K^* $ such that $g_2$ is a $\mathbb{Z} [G]$-homomorphism,
\begin{eqnarray}\label{uuu}
 g_1(\alpha,\varepsilon)=g_1(\varepsilon, \alpha)=1 \quad {\text {for all}}\quad   \alpha \in G, \label{con3+}
\end{eqnarray}
and
\begin{eqnarray}
 \delta^2 (g_1) =g_2\circ \kappa:G^3 \to K^*.  \label{con3}
\end{eqnarray}
The $\mathbb{Z}[G]$-linearity of   $g_2:A\to K^*$ means
  that $g_2$ is a group homomorphism  such that for all
  $\alpha \in G$, $a\in A$,
\begin{eqnarray}\label{lol}
g_2(\,^\alpha a)=g_2(a).
\end{eqnarray}
For example,   for any map $\psi :G\to K^*$ the pair
$$(g_1=\delta^1(\psi):G\times G\to K^*, g_2=1:A\to K^*)$$ is a
$\kappa$-pair. We call it a {\it coboundary $\kappa$-pair}.

\begin{lemma} \label{example}
  Let  $(g_1, g_2)$ be a $\kappa$-pair. For each $\alpha\in G$,
  let $V_\alpha$ be the one-dimensional vector space over $K$ with basis vector $l_\alpha$.
  We provide $V=\oplus_{\alpha\in G} V_\alpha$ with a structure of an $A$-module by
  $a v= g_2(a) v$ for all $v\in V$.
   We provide $V $ with $K$-bilinear multiplication
by
\begin{eqnarray}
 l_{\alpha} l_{\beta} =g_1(\alpha,\beta)^{-1}l_{\alpha\beta}
\end{eqnarray}
for all $\alpha, \beta \in G$.   Let $\eta_\varepsilon
:V_\varepsilon\otimes V_\varepsilon\to K$ be the $K$-bilinear form
such that
\begin{eqnarray}
\eta_\varepsilon (l_\varepsilon\otimes l_\varepsilon)=g_1(\varepsilon,\varepsilon).
\end{eqnarray}
Let $\varphi_{\beta}:V \to V $ be the $K$-homomorphism defined by
\begin{eqnarray}
\varphi_{\beta}(l_{\alpha})=g_1(\beta,\alpha)^{-1}g_1(\beta\alpha\beta^{-1},\beta) \, l_{\beta\alpha\beta^{-1}}.
\end{eqnarray}
for all $\alpha, \beta \in G$. Then the $G$-graded vector space $V $
with this data is a TF-algebra over $(G,A,\kappa)$.
\end{lemma}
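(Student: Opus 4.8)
The plan is to verify the five axioms of a TF-algebra in turn, using the fact that every graded piece $V_\alpha$ is one-dimensional. Consequently each structure map acts on each $V_\alpha$ as multiplication by a scalar in $K^*$, and every required identity collapses to an equality of scalars. The only inputs are the defining relations of a $\kappa$-pair: the normalization \eqref{con3+}, the $G$-invariance \eqref{lol} of $g_2$, and the cocycle relation \eqref{con3}, namely $\delta^2(g_1)=g_2\circ\kappa$. Since $K^*$ carries the trivial $G$-action, one has $\delta^2(g_1)(\alpha,\beta,\gamma)=g_1(\beta,\gamma)\,g_1(\alpha\beta,\gamma)^{-1}g_1(\alpha,\beta\gamma)\,g_1(\alpha,\beta)^{-1}$, and this is the expansion I substitute whenever a factor $g_2(\kappa(\cdots))$ appears.

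First I would dispose of the easy axioms. Axiom 1 is immediate: for $u\in V_\alpha$ we have $au=g_2(a)u=g_2(\,^\alpha a)u=\,^\alpha a\,u$ by \eqref{lol}, which is \eqref{bimodule}, and each $V_\alpha$ is one-dimensional. For Axiom 2 the grading of the product is built into the defining formula; the unit is $1_V=l_\varepsilon$, for which $l_\varepsilon l_\alpha=l_\alpha l_\varepsilon=l_\alpha$ follows from \eqref{con3+}. Axiom 3 is trivial since $\eta_\varepsilon$ is a nonzero form on the one-dimensional space $V_\varepsilon$ (as $g_1(\varepsilon,\varepsilon)\in K^*$), the pairing \eqref{nas} sends $l_\alpha\otimes l_{\alpha^{-1}}$ to $g_1(\alpha,\alpha^{-1})^{-1}g_1(\varepsilon,\varepsilon)\neq 0$, and symmetry is automatic in dimension one. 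Within Axiom 4, the identities \eqref{mor}, \eqref{id}, \eqref{mor+}, \eqref{com}, \eqref{epg} are short direct substitutions: \eqref{mor} again uses \eqref{lol}; \eqref{id} uses the cancellation $g_1(\beta,\beta)^{-1}g_1(\beta,\beta)=1$; \eqref{mor+} uses \eqref{con3+}; and \eqref{epg} follows once one notes $\varphi_\beta(l_\varepsilon)=l_\varepsilon$.

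The substantive computations are the twisted associativity \eqref{associativity}, the two multiplicativity laws \eqref{pd} and \eqref{pgb} for $\varphi$, and the trace condition \eqref{tr}. For \eqref{associativity} I would compute both $(l_\alpha l_\beta)l_\gamma$ and $l_\alpha(l_\beta l_\gamma)$ as multiples of $l_{\alpha\beta\gamma}$; their ratio is exactly $\delta^2(g_1)(\alpha,\beta,\gamma)=g_2(\kappa(\alpha,\beta,\gamma))$, which is precisely the scalar by which $\kappa(\alpha,\beta,\gamma)\in A$ acts, giving \eqref{associativity}. The same strategy handles \eqref{pd}, \eqref{pgb}, and \eqref{tr}: expand each scalar $g_2(l^{\beta}_{\alpha,\gamma})$, $g_2(h^{\alpha}_{\gamma,\beta})$, or $g_2(\kappa(\cdots))$ via \eqref{con3}, and check that after simplification the remaining $g_1$-factors telescope to the required identity. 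For \eqref{tr} one also needs the explicit form of $\varphi_\alpha^{-1}$, obtained by inverting the defining formula for $\varphi_\alpha$; since all spaces are one-dimensional, each trace is simply the scalar multiplier, so \eqref{tr} reduces to a single scalar equation.

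The main obstacle is purely the bookkeeping in \eqref{pd}, \eqref{pgb}, and \eqref{tr}: each involves conjugated group elements such as $\beta\alpha\beta^{-1}$ and $\alpha\beta\alpha^{-1}\beta^{-1}$, and one must expand several instances of $g_2\circ\kappa=\delta^2(g_1)$ and confirm that all $g_1$-terms cancel in pairs. There is no conceptual difficulty, since every identity is forced by \eqref{con3}, but care is needed to track the arguments of $g_1$ through the conjugations and through the relations $\delta\beta\alpha\beta^{-1}=\alpha$ and $\delta\beta=\alpha\beta\alpha^{-1}$ (with $\delta=\alpha\beta\alpha^{-1}\beta^{-1}$) that arise in the trace computation.
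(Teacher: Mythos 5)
Your proposal is correct and follows essentially the same route as the paper's proof: a direct axiom-by-axiom verification in which every identity collapses to a scalar equation because each $V_\alpha$ is one-dimensional, with the normalization \eqref{con3+}, the $G$-invariance \eqref{lol}, and the relation $\delta^2(g_1)=g_2\circ\kappa$ supplying exactly the cancellations needed for \eqref{associativity}, \eqref{pd}, \eqref{pgb}, and \eqref{tr}. The paper likewise reduces the trace condition to comparing the scalar multipliers of $\mu_c\varphi_\beta$ and $\varphi_\alpha^{-1}\mu_c$ after expanding the two $g_2(\kappa(\cdots))$ factors via $\delta^2(g_1)$, just as you describe.
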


The TF-algebra constructed in this lemma   is denoted by
$V(g_1,g_2)$.

\begin{proof}  The $K[A]$-bilinearity of multiplication in  $V$
follows from the definitions. Formula (\ref{bimodule}) follows from
the identity \eqref{lol}. It is sufficient to verify
(\ref{associativity}) for the basis vectors $u=l_\alpha, v=l_\beta,
w= l_\gamma$:
\begin{eqnarray*}
 (l_{\alpha}  l_{\beta})  l_{\gamma}
&=& (g_1(\alpha,\beta)^{-1}l_{\alpha\beta}) \,  l_{\gamma} \\
&=&g_1(\alpha,\beta)^{-1}g_1(\alpha\beta,\gamma)^{-1}l_{\alpha\beta\gamma}\\
&=&g_1(\beta,\gamma)^{-1}g_1(\alpha,\beta\gamma)^{-1}g_2(\kappa(\alpha,\beta,\gamma))\, l_{\alpha\beta\gamma}\\
&=&  g_2( \kappa(\alpha,\beta,\gamma)) \, l_{\alpha} (l_{\beta} l_{\gamma}) \\
&=&\kappa(\alpha,\beta,\gamma)  \,  l_{\alpha}  (l_{\beta} l_{\gamma}) .
\end{eqnarray*}
Formula \eqref{con3+} implies that $l_\varepsilon$ is the unit
element of $V$. The symmetry and non-degeneracy of
$\eta_\varepsilon$ are obvious. The non-degeneracy of the form
$\eta_\alpha:V_\alpha \otimes V_{\alpha^{-1}}\to K$ defined by
\eqref{nas} follows from the formula $\eta(l_{\alpha }\otimes
l_{\alpha ^{-1}}) = g_1(\alpha , \alpha^{-1})^{-1}$.
  We
check (\ref{mor}):
\begin{eqnarray*} \varphi_{\beta}(al_{\alpha})&=&g_1(\beta,\alpha)^{-1}g_1(\beta\alpha\beta^{-1},\beta)\, g_2(a) \, l_{\beta\alpha\beta^{-1}}\\
&=&g_2(\,^{\beta}a)\, g_1(\beta,\alpha)^{-1}g_1(\beta\alpha\beta^{-1},\beta)\, l_{\beta\alpha\beta^{-1}}\\
&=&\,^{\beta}a\, \varphi_{\beta}(l_{\alpha}).
\end{eqnarray*}
Formulas  (\ref{id}) and \eqref{mor+} follow from the definitions.
 We check (\ref{com}):
\begin{eqnarray*}
 \varphi_{\beta}(l_{\alpha})\,  l_{\beta} &=& g_1(\beta,\alpha)^{-1}g_1(\beta\alpha\beta^{-1},\beta
 ) \, l_{\beta\alpha\beta^{-1}}\,  l_{\beta} \\
&=&g_1(\beta,\alpha)^{-1} \, l_{\beta\alpha} = l_{\beta}\,  l_{\alpha}.
\end{eqnarray*}
 Similar computations prove  (\ref{epg}) and (\ref{pd}).
 We now check (\ref{pgb}). Observe that for $\alpha, \beta, \gamma\in G$,
$$
\varphi_{\gamma}(\varphi_{\beta}(l_{\alpha}))=g_1(\gamma,\beta\alpha\beta^{-1})^{-1}
g_1(\gamma\beta\alpha(\gamma\beta)^{-1},\gamma)\,
g_1(\beta,\alpha)^{-1}g_1(\beta\alpha\beta^{-1},\beta)\,l_{\gamma\beta\alpha(\gamma\beta)^{-1}}
$$
and
\begin{eqnarray*}
\varphi_{\gamma\beta}(l_{\alpha})&=&g_1(\gamma\beta,\alpha)^{-1}
g_1(\gamma\beta\alpha(\gamma\beta)^{-1},\gamma\beta)\, l_{\gamma\beta\alpha(\gamma\beta)^{-1}}.
\end{eqnarray*}
Therefore $\varphi_{\gamma\beta}(l_{\alpha})=h \,
\varphi_{\gamma}(\varphi_{\beta}(l_{\alpha}))$, where
$$h= g_1(\gamma\beta,\alpha)^{-1}
g_1(\gamma\beta\alpha(\gamma\beta)^{-1},\gamma\beta)\,  \times $$
$$\times \,  g_1(\gamma,\beta\alpha\beta^{-1})\,
g_1(\gamma\beta\alpha(\gamma\beta)^{-1},\gamma)^{-1}
g_1(\beta,\alpha)\, g_1(\beta\alpha\beta^{-1},\beta)^{-1}.$$ Now, a
direct computation using the definition of
$h_{\gamma,\beta}^{\alpha}$ and the assumption that $g_2$ is a group
homomorphism satisfying $ g_2\circ \kappa=\delta^2 (g_1)$ shows that
$ g_2(h_{\gamma,\beta}^{\alpha})=h$. Therefore
$$\varphi_{\gamma\beta}(l_{\alpha})=h \,
\varphi_{\gamma}(\varphi_{\beta}(l_{\alpha})) =h_{\gamma,\beta}^{\alpha} \,
\varphi_{\gamma}(\varphi_{\beta}(l_{\alpha})).$$

To check the trace identity (\ref{tr}), observe that
\begin{eqnarray*}
&&g_2(\kappa(\alpha\beta\alpha^{-1}\beta^{-1},\beta,\alpha))= (\delta^2 (g_1)) (\alpha\beta\alpha^{-1}\beta^{-1},\beta,\alpha)\\
&=&g_1(\beta,\alpha)\, g_1(\alpha\beta\alpha^{-1},\alpha)^{-1}g_1(\alpha\beta\alpha^{-1}\beta^{-1},\beta\alpha)\, g_1(\alpha\beta\alpha^{-1}\beta^{-1},\beta)^{-1}.
\end{eqnarray*}
and similarly
\begin{eqnarray*}
&&g_2(\kappa(\alpha\beta\alpha^{-1}\beta^{-1},\beta\alpha\beta^{-1},\beta))=
(\delta^2 (g_1)) (\alpha\beta\alpha^{-1}\beta^{-1},\beta\alpha\beta^{-1},\beta)\\
&=&g_1(\beta\alpha\beta^{-1},\beta)\, g_1(\alpha,\beta)^{-1}g_1(\alpha\beta\alpha^{-1}\beta^{-1},\beta\alpha)\, g_1(\alpha\beta\alpha^{-1}\beta^{-1},\beta\alpha\beta^{-1})^{-1},
\end{eqnarray*}
It follows from the definitions that
\begin{eqnarray*}
{\text {Tr}}(\mu_c\varphi_\beta)=g_1(\beta,\alpha)^{-1}g_1(\beta\alpha^{-1}\beta^{-1},\beta) \, g_1(\alpha\beta\alpha^{-1}\beta^{-1},\beta\alpha^{-1}\beta^{-1})^{-1}
\end{eqnarray*}
and
\begin{eqnarray*}
{\text {Tr}}(\varphi_{\alpha}^{-1}\mu_c)=g_1(\alpha\beta\alpha^{-1},\alpha)^{-1}g_1(\alpha,\beta)\,  g_1(\alpha\beta\alpha^{-1}\beta^{-1},\beta)^{-1}.
\end{eqnarray*}
Comparing these expressions, we obtain that
$$g_2(\kappa(\alpha\beta\alpha^{-1}\beta^{-1},\beta,\alpha)) \,
{\text {Tr}}(\mu_c\varphi_\beta)=
g_2(\kappa(\alpha\beta\alpha^{-1}\beta^{-1},\beta\alpha\beta^{-1},\beta))\,
{\text {Tr}}(\varphi_{\alpha}^{-1}\mu_c).$$ This formula is equivalent to (\ref{tr}).
\end{proof}

\begin{lemma} \label{example+}
Any simple TF-algebra $V=\oplus_{\alpha \in G} \, V_\alpha$ over
$(G,A,\kappa)$  such that  $\eta_{\varepsilon}(1_V\otimes 1_V)=1$ is
isomorphic to $V(g_1,g_2)$ for a certain  $\kappa$-pair $(g_1,
g_2)$. This $\kappa$-pair is determined by $V$ uniquely up to
multiplication of $g_1$ by   $  \delta^1(\psi)$  for a map $\psi
:G\to K^*$.
\end{lemma}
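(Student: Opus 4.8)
The plan is to read off the pair $(g_1,g_2)$ directly from the structure of $V$ by choosing, for each $\alpha\in G$, a basis vector $l_\alpha$ of the one-dimensional space $V_\alpha$, normalized so that $l_\varepsilon=1_V$. Since $V_{\alpha\beta}$ is one-dimensional, $l_\alpha l_\beta$ is a scalar multiple of $l_{\alpha\beta}$, and each $a\in A$ acts on $V_\varepsilon$ as a scalar; I define $g_1$ and $g_2$ by $l_\alpha l_\beta=g_1(\alpha,\beta)^{-1}l_{\alpha\beta}$ and $a\,1_V=g_2(a)\,1_V$. The first thing I must check is that $g_1$ is $K^*$-valued, i.e. that no product $l_\alpha l_\beta$ vanishes. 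For products of the form $l_\gamma l_{\gamma^{-1}}$ this is immediate from the non-degeneracy of the pairing \eqref{nas}: on the one-dimensional spaces $V_\gamma,V_{\gamma^{-1}}$ non-degeneracy forces $\eta_\varepsilon(l_\gamma l_{\gamma^{-1}}\otimes 1_V)\neq0$, so $l_\gamma l_{\gamma^{-1}}$ is a nonzero multiple of $1_V$. For a general product I would use twisted associativity \eqref{associativity}: $(l_\alpha l_\beta)l_{\beta^{-1}}=\kappa(\alpha,\beta,\beta^{-1})\,l_\alpha(l_\beta l_{\beta^{-1}})$, whose right-hand side is a nonzero multiple of $l_\alpha$ because $l_\beta l_{\beta^{-1}}\neq0$ and $A$ acts by nonzero scalars; hence $l_\alpha l_\beta\neq0$.

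Next I would verify that $(g_1,g_2)$ is a $\kappa$-pair. That $g_2$ is a group homomorphism is immediate from the module axioms, and applying \eqref{multabil} to $l_\gamma l_{\gamma^{-1}}$ shows that $a$ acts by the same scalar $g_2(a)$ on every $V_\alpha$; relation \eqref{bimodule} then gives $g_2({}^\alpha a)=g_2(a)$, which is the $\mathbb{Z}[G]$-linearity \eqref{lol}. The normalization \eqref{con3+} follows from $l_\varepsilon=1_V$ being the unit. The key identity \eqref{con3}, $\delta^2(g_1)=g_2\circ\kappa$, is exactly twisted associativity written in the basis: expanding both sides of $(l_\alpha l_\beta)l_\gamma=\kappa(\alpha,\beta,\gamma)\,l_\alpha(l_\beta l_\gamma)$ gives $g_1(\alpha,\beta)^{-1}g_1(\alpha\beta,\gamma)^{-1}=g_2(\kappa(\alpha,\beta,\gamma))\,g_1(\beta,\gamma)^{-1}g_1(\alpha,\beta\gamma)^{-1}$, which rearranges to $g_2\circ\kappa=\delta^2(g_1)$ (the $G$-action on $K^*$ being trivial). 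I expect this computation, together with the non-vanishing of $g_1$, to be the technical heart of the argument, though it is routine.

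Then I would exhibit the isomorphism $V\cong V(g_1,g_2)$ of Lemma \ref{example} as the $K$-linear map sending $l_\alpha$ to the standard generator of $V(g_1,g_2)_\alpha$. By construction it is $K[A]$-linear and multiplicative and sends $1_V$ to the unit, and it preserves $\eta_\varepsilon$ since $\eta_\varepsilon(1_V\otimes 1_V)=1=g_1(\varepsilon,\varepsilon)$. The only point needing care is that it intertwines the projective actions $\varphi_\beta$. Rather than invoke the axioms on $\varphi_\beta$ directly, I would observe that in a simple TF-algebra $\varphi_\beta$ is forced by the multiplication: identity \eqref{com} applied to $v=l_\beta$ gives $l_\beta l_\alpha=\varphi_\beta(l_\alpha)\,l_\beta$, and since right multiplication by $l_\beta$ is an isomorphism $V_{\beta\alpha\beta^{-1}}\to V_{\beta\alpha}$, one solves uniquely $\varphi_\beta(l_\alpha)=g_1(\beta,\alpha)^{-1}g_1(\beta\alpha\beta^{-1},\beta)\,l_{\beta\alpha\beta^{-1}}$, which is precisely the formula defining $\varphi_\beta$ on $V(g_1,g_2)$. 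Hence the map commutes with $\varphi_\beta$ and is an isomorphism of TF-algebras.

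Finally, for uniqueness I note that $g_2$ is intrinsic to $V$: it records the action of $A$, preserved by any isomorphism of TF-algebras by $K[A]$-linearity, so $g_2$ is independent of all choices. The cochain $g_1$ depends only on the basis $\{l_\alpha\}$ (with $l_\varepsilon=1_V$ forced by the unit and the normalization of $\eta_\varepsilon$). Replacing $l_\alpha$ by $\psi(\alpha)l_\alpha$ for a map $\psi:G\to K^*$ with $\psi(\varepsilon)=1$ changes the product to $l'_\alpha l'_\beta=\psi(\alpha)\psi(\beta)\psi(\alpha\beta)^{-1}g_1(\alpha,\beta)^{-1}l'_{\alpha\beta}$, that is, it multiplies $g_1$ by $\delta^1(\psi^{-1})$. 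Conversely, any $\kappa$-pair $(g_1',g_2')$ with $V\cong V(g_1',g_2')$ arises by transporting the standard basis of $V(g_1',g_2')$ back through the (grading-preserving) isomorphism, hence from such a basis change; so $g_2'=g_2$ and $g_1'$ differs from $g_1$ by a coboundary $\delta^1(\psi)$. This gives the stated uniqueness.
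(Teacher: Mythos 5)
Your proposal is correct and follows essentially the same route as the paper's proof: choose a basis with $l_\varepsilon=1_V$, read off $g_1$ and $g_2$, use the non-degeneracy of the pairing \eqref{nas} together with twisted associativity to show that $g_1$ takes values in $K^*$ and that the action of $A$ is the same scalar on every $V_\alpha$, and obtain uniqueness from a change of basis. The only difference is that you make explicit why the resulting isomorphism intertwines the automorphisms $\varphi_\beta$ (deriving their formula from \eqref{com}), a verification the paper compresses into the phrase ``It is clear that $V=V(g_1,g_2)$.''
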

\begin{proof}
   For each $\alpha\in G$,  fix a nonzero vector
$l_{\alpha}\in V_{\alpha}$. In the role of  $l_\varepsilon\in
V_\varepsilon$ we take $1_V$. For any $a\in A$ and $\alpha\in G$, we
have
 $al_{\alpha}=g_2^{\alpha}(a)l_{\alpha}$ with $ g_2^{\alpha}(a)\in
 K$.
Clearly,
$$
l_{\alpha} = 1_A l_{\alpha}= (a^{-1}a)\, l_{\alpha}= g_2^{\alpha}  (a^{-1}) \, g_2^{\alpha}(a) \, l_{\alpha}
$$
and so  $g_2^{\alpha}(a)\in K^*$.

Given $\alpha, \beta \in G$, we have  $
 l_{\alpha}  l_{\beta} =c(\alpha,\beta)\, l_{\alpha\beta}
$ for some $c(\alpha,\beta)\in K$. Since the pairing \eqref{nas} is
non-degenerate, $ l_{\alpha}  l_{\alpha^{-1}} \neq 0$. Thus,
$c(\alpha,\alpha^{-1})\neq 0$ for all $\alpha$.

We claim that  $g_2^{\alpha}(a)$ does not depend on $\alpha$ for
every $a\in A$. Indeed,
\begin{eqnarray*}
(al_{\alpha})\, l_{\alpha^{-1}}=g_2^{\alpha}(a)\, l_{\alpha}\, l_{\alpha^{-1}}=g_2^{\alpha}(a)\,
c(\alpha, \alpha^{-1}) \, l_{\varepsilon}
\end{eqnarray*}
and
\begin{eqnarray*}
a(l_{\alpha}l_{\alpha^{-1}})=a c(\alpha,\alpha^{-1}) \, l_{\varepsilon}
=g_2^{\varepsilon}(a) c(\alpha, \alpha^{-1})\, l_{\varepsilon}.
\end{eqnarray*}
 Since $ (al_{\alpha})\,
l_{\alpha^{-1}}=a(l_{\alpha}l_{\alpha^{-1}})$ and $c(\alpha,
\alpha^{-1})\neq 0$, we   have $
g_2^{\alpha}(a)=g_2^{\varepsilon}(a)$. Set $g_2=g_2^\varepsilon:A\to
K^*$. Since $V $ is an $A$-module, the map $g_2$ is a group
homomorphism. Formula (\ref{bimodule}) implies the identity
\eqref{lol}.

Given $\alpha, \beta\in G$, we have
\begin{eqnarray*}
c(\beta,\beta^{-1})\, l_{\alpha}&=&c(\beta,\beta^{-1})\, l_{\alpha} \, l_\varepsilon \\
&=& l_{\alpha}(l_{\beta}\, l_{\beta^{-1}}) \\
&=&\kappa(\alpha,\beta,\beta^{-1})^{-1} (l_{\alpha}\, l_{\beta})\, l_{\beta^{-1}} \\
&=&\kappa(\alpha,\beta,\beta^{-1})^{-1} c(\alpha,\beta)\, l_{\alpha\beta}\, l_{\beta^{-1}}.
\end{eqnarray*}
Therefore $c(\alpha,\beta)\in K^*$. Let $g_1:G\times G\to K^*$ be
the map defined by $g_1 (\alpha, \beta)=(c(\alpha, \beta))^{-1}$ for
all $\alpha, \beta$. The identity $
l_{\alpha}l_{\beta}=g_1(\alpha,\beta)^{-1}l_{\alpha\beta} $ and
Formula (\ref{associativity}) yield
\begin{eqnarray*}
g_1(\alpha,\beta)^{-1}g_1(\alpha\beta,\gamma)^{-1}l_{\alpha\beta\gamma}=g_2(\kappa(\alpha,\beta,\gamma))\,
g_1(\alpha,\beta\gamma)^{-1}g_1(\beta,\gamma)^{-1}l_{\alpha\beta\gamma}.
\end{eqnarray*}
This implies that $g_2\circ \kappa= \delta^2 (g_1)$. The equality
\eqref{uuu} follows from the definitions and the choice
$l_\varepsilon=1_V$. Thus,  the maps  $g_1, g_2$ form a
$\kappa$-pair. It is clear that
  $V =V(g_1,g_2)$.

The second claim of the lemma   follows from the fact that any two
  bases  $l=\{l_\alpha\}_{\alpha\in G}$ and
$l'=\{l'_\alpha\}_{\alpha\in G}$ in $V$ as above are related by
$l_\alpha= \psi_\alpha l'_\alpha$, where  $\psi_\alpha\in K^*$ for
all $\alpha $. The bases $l$, $l'$ yield the same map  $g_2:A\to
K^*$ while the associated maps $g_1:G\times G\to K^*$    differ by
the coboundary of the map $G\to K^*, \alpha\to \psi_\alpha$.
\end{proof}

\subsection{The group $H^2(G,A,\kappa;K^*)$} The $\kappa$-pairs $(g_1:G\times G\to K^*, \, g_2:A\to K^*)$  form
an abelian group $H=H(G,A,\kappa)$ under pointwise multiplication.
The neutral element of $H$ is the $\kappa$-pair $(g_1=1, g_2=1)$.
The inverse of a $\kappa$-pair $(g_1, g_2)$ in $H$  is the
$\kappa$-pair $(g_1^{-1}, g_2^{-1})$. The coboundary $\kappa$-pairs
form a subgroup of $H$.  Let $H^2(G,A,\kappa;K^*)$ be the quotient
of $H$ by the subgroup of coboundary $\kappa$-pairs. Lemma
\ref{example+} yields   a bijective correspondence between  the set
$H^2(G,A,\kappa;K^*) \times K^*$ and the set of isomorphism classes
of simple TF-algebras over $(G, A, \kappa)$. This correspondence
assigns to a pair ($h\in H^2(G,A,\kappa;K^*)$, $z\in K^*$)  the
isomorphism class of the $z$-rescaled TF-algebra $V(g_1, g_2)$,
where $(g_1, g_2)$ is an arbitrary $\kappa$-pair representing $h$.
The following theorem due to Eilenberg and   MacLane computes
$H^2(G,A,\kappa;K^*)$    in topological terms.

\begin{theorem}\label{EMMM} (\cite{em}) Let   $X$ be a   path connected topological space with base point $x_0$.
Let $G=\pi_1(X,x_0)$, $A=\pi_2(X,x_0)$, and
$\Omega=\{\{{u}_{\alpha}\}_{\alpha\in G}; \{
 {f}_{\alpha, \beta}\}_{\alpha,\beta \in G}\}$  be a  basic system of loops and triangles in
 $X$.
 Let $\theta$ be a $K^*$-valued singular 2-cocycle on $X$ representing a cohomology class
 $[\theta]\in H^2(X,K^*)$.
 Then the pair  $(g_1:G\times G\to K^*, \,
g_2:A\to K^*)$   defined by
 $g_1(\alpha,\beta)=\theta(f_{\alpha,\beta})$ for   $\alpha, \beta\in G $ and  $ g_2(a)=\theta( {a})
 $ for   $a\in A$ is a $\kappa$-pair, where   $\kappa=\kappa^\Omega:G^3\to A$ be the 3-cocycle
 defined  in Section~\ref{XXX}. Moreover,   the  formula $[\theta] \mapsto (g_1, g_2)$ defines an isomorphism  $$H^2(X,K^*)\cong
H^2(G,A,\kappa;K^*).$$\label{cxk}
\end{theorem}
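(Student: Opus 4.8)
The plan is to prove the statement in three stages: verify directly that $(g_1,g_2)$ is a $\kappa$-pair; check that $[\theta]\mapsto(g_1,g_2)$ descends to a well-defined homomorphism $\Phi$ on cohomology; and then identify both $H^2(X,K^*)$ and $H^2(G,A,\kappa;K^*)$ with the low-degree terms of one and the same exact sequence, so that the five lemma forces $\Phi$ to be an isomorphism. I may assume $\theta$ normalized, since every class in $H^2(X,K^*)$ has a normalized representative and normalized cochains vanish on degenerate simplices. For the first stage I read the three defining conditions of a $\kappa$-pair off the geometry of Section~\ref{XXX}. The map $g_2$ is the composite of the Hurewicz homomorphism $\pi_2(X,x_0)\to H_2(X)$ with evaluation of $\theta$; it is a homomorphism because Hurewicz is, and $g_2(\,^\alpha a)=g_2(a)$ because $a$ and $\,^\alpha a$ are freely homotopic and hence have equal Hurewicz image, so $g_2\in\mathrm{Hom}_{\mathbb{Z}[G]}(A,K^*)$. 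Normalization of $\theta$ makes $f_{\alpha,\varepsilon},f_{\varepsilon,\alpha}$ degenerate, giving $g_1(\alpha,\varepsilon)=g_1(\varepsilon,\alpha)=1$. The core is the relation \eqref{con3}: the four faces of the singular $3$-simplex $F\colon\Delta_3\to X$ of Figure~\ref{fig:kinvd} are exactly $f_{\beta,\gamma},f_{\alpha\beta,\gamma},f_{\alpha,\beta\gamma},f_{\alpha,\beta}$, so $\partial F=f_{\beta,\gamma}-f_{\alpha\beta,\gamma}+f_{\alpha,\beta\gamma}-f_{\alpha,\beta}$ is a $2$-cycle representing the Hurewicz image of $\kappa(\alpha,\beta,\gamma)$; evaluating the cocycle $\theta$ on it yields $g_2(\kappa(\alpha,\beta,\gamma))$ on one side and the alternating product $\delta^2 g_1(\alpha,\beta,\gamma)$ on the other.

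For the second stage I replace $\theta$ by $\theta\,\delta\xi$ for a singular $1$-cochain $\xi$ and set $\psi(\alpha)=\xi(u_\alpha)$. Since $\delta\xi(f_{\alpha,\beta})=\xi(\partial f_{\alpha,\beta})=\delta^1\psi(\alpha,\beta)$ and $\delta\xi$ vanishes on the $2$-cycles representing elements of $A$, the pair $(g_1,g_2)$ changes precisely by the coboundary $\kappa$-pair $(\delta^1\psi,1)$, so its class in $H^2(G,A,\kappa;K^*)$ is unchanged. Multiplicativity of $\theta\mapsto(g_1,g_2)$ is immediate, yielding a homomorphism $\Phi\colon H^2(X,K^*)\to H^2(G,A,\kappa;K^*)$.

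For the final stage I exhibit two parallel exact sequences. On the algebraic side the assignments $g_1\mapsto(g_1,1)$ and $(g_1,g_2)\mapsto g_2$ give
\[ 0\to H^2(G,K^*)\xrightarrow{\ \mathrm{inf}\ } H^2(G,A,\kappa;K^*)\xrightarrow{\ p\ }\mathrm{Hom}_{\mathbb{Z}[G]}(A,K^*)\xrightarrow{\ \cdot\,k\ }H^3(G,K^*), \]
where the last map is $g_2\mapsto[g_2\circ\kappa]=g_{2*}(k)$; exactness is read off the definition of a $\kappa$-pair, the image of $p$ being those $g_2$ with $g_2\circ\kappa$ a coboundary. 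On the topological side, the low-degree terms of the Serre spectral sequence of $K(A,2)\to X\to K(G,1)$ (the Eilenberg--MacLane computation of~\cite{em}) give
\[ 0\to H^2(G,K^*)\xrightarrow{\ q^*\ } H^2(X,K^*)\xrightarrow{\ r\ }\mathrm{Hom}(A,K^*)^{G}\xrightarrow{\ \cup\,k\ }H^3(G,K^*), \]
with $q\colon X\to K(G,1)$ the first Postnikov projection, $r$ restriction to $\pi_2$, and transgression equal to cup product with the $k$-invariant. Here $\Phi q^*=\mathrm{inf}$ (since $q$ kills $\pi_2$, forcing $g_2=1$, and $q^*$ realizes the bar-resolution identification $H^2(K(G,1),K^*)=H^2(G,K^*)$) and $p\Phi=r$ (both send $[\theta]$ to $a\mapsto\theta(a)$). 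Using the identification established in Section~\ref{XXX} that $[\kappa^\Omega]$ is the $k$-invariant of $X$, the two right-hand maps agree, so both four-term sequences induce short exact sequences $0\to H^2(G,K^*)\to H^2(\cdot)\to\ker(\cdot\,k)\to 0$ with the same sub and quotient, on which $\Phi$ is the identity; the short five lemma makes $\Phi$ an isomorphism.

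The main obstacle I anticipate is this last stage: matching the two exact sequences precisely, and in particular identifying the topological transgression $\cup\,k$ with the algebraic pushforward $g_2\mapsto g_{2*}(k)$. This rests on the identification of the combinatorial cocycle $\kappa^\Omega$ of Section~\ref{XXX} with the classical first $k$-invariant, which I would take from~\cite{em}; granting it, the commutativity of the comparison squares and the five-lemma conclusion are routine.
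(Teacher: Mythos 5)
The paper offers no proof of this statement to compare against: Theorem~\ref{EMMM} is stated as a quotation from Eilenberg--MacLane~\cite{em}, with no argument supplied. Your sketch is essentially a reconstruction of the standard proof, and its overall architecture (direct verification of the $\kappa$-pair axioms, well-definedness on cohomology, then a comparison of two four-term exact sequences via the short five lemma) is sound. Stages one and two are correct modulo routine normalization bookkeeping, but you should fix one genuine slip of wording in stage one: there is no singular $3$-simplex $F\colon\Delta_3\to X$ in Figure~\ref{fig:kinvd} --- the figure defines a map on $\partial\Delta_3$ only, and if that map extended over $\Delta_3$ then the chain $f_{\beta,\gamma}-f_{\alpha\beta,\gamma}+f_{\alpha,\beta\gamma}-f_{\alpha,\beta}$ would be a boundary, $\theta$ would evaluate trivially on it, and the key relation~\eqref{con3} would degenerate to $\delta^2(g_1)=1$. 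What you need (and clearly intend) is that this alternating sum of the four restrictions is a $2$-cycle, being the image of the fundamental class of $\partial\Delta_3\cong S^2$, and hence represents the Hurewicz image of $\kappa(\alpha,\beta,\gamma)$; stated that way the computation is correct. In stage three, two caveats. First, the fiber of the first Postnikov map $X\to K(G,1)$ is the universal cover, not $K(A,2)$; you should either use $\widetilde X$ (whose $H^1$ vanishes and whose $H^2(-;K^*)$ is $\mathrm{Hom}(A,K^*)$ by Hurewicz and universal coefficients, which is all the five-term argument uses) or first replace $X$ by its second Postnikov stage, noting that this changes nothing in $H^2$ and injects on the relevant part of $H^3$; for a bare path-connected space this also requires a CW or singular-simplicial replacement, harmless but worth saying. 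Second, and more importantly, the identification of the transgression $d_3\colon\mathrm{Hom}(A,K^*)^G\to H^3(G,K^*)$ with $g_2\mapsto g_{2*}[\kappa^\Omega]$ --- equivalently, that the combinatorial cocycle of Section~\ref{XXX} represents the classical $k$-invariant and controls $d_3$ --- is precisely the nontrivial content of~\cite{em}. So your argument correctly isolates where the real work lies, but it is not more self-contained than the paper's citation; it reduces the theorem to the same source.
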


\subsection{The underlying TF-algebras of   cohomological
HQFTs} We keep the assumptions of Theorem~\ref{EMMM}.

\begin{theorem}
Let $(V^{\theta},\tau^{\theta})$ be the 2-dimensional $X$-HQFT
determined by $\theta\in H^2(X,K^*)$ (see \cite{tu}). If $\theta$
corresponds to  $(g_1,g_2)\in H^2(G,A,\kappa, K^*)$,  then the
underlying TF-algebra of $(V^{\theta},\tau^{\theta})$ is isomorphic
to  $V(g_1,g_2)$.
\end{theorem}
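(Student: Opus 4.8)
The plan is to exploit that the cohomological HQFT $(V^\theta,\tau^\theta)$ assigns a one-dimensional space to every circle, so the underlying TF-algebra is \emph{simple} and its structure constants can be read off directly. First I would recall from \cite{tu} that $V^\theta_{(S^1,u_\alpha)}=K$ for every $\alpha$, fix the canonical generator $l_\alpha$ of each (taking $l_\varepsilon=1_V$), and use the defining property of $\tau^\theta$: for an $X$-surface $W$ with map $g\colon W\to X$, the homomorphism $\tau^\theta(W)$ between one-dimensional spaces is multiplication by the scalar obtained by evaluating $g^*\theta$ on the relative fundamental class of $W$. Since each $X$-surface of Section~3 is assembled from triangles carrying maps of the form $f_{\rho,\eta}$, this scalar is a product of factors $\theta(f_{\rho,\eta})^{\pm1}=g_1(\rho,\eta)^{\pm1}$, the sign being dictated by the orientation of the triangle.

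Next I would compute the four pieces of TF-structure in the basis $\{l_\alpha\}$ and match them with the formulas of Lemma~\ref{example}. For the action of $A$, the element $a$ acts on $V_\alpha$ by $\tau^\theta(S(\alpha,a))$ (Figure~\ref{fig:multiv}); the cylinder part contributes trivially and the glued sphere contributes $\theta$ evaluated on $a\in\pi_2(X)$, which is $g_2(a)$ by Theorem~\ref{EMMM}, so $a\,l_\alpha=g_2(a)\,l_\alpha$. For multiplication, $l_\alpha l_\beta=\tau^\theta(D(\alpha,\beta))\,l_{\alpha\beta}$ (Figure~\ref{fig:prod}), and the single essential triangle of $D(\alpha,\beta)$ carries $f_{\alpha,\beta}$, giving $\tau^\theta(D(\alpha,\beta))=g_1(\alpha,\beta)^{-1}$. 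For the projective action, evaluating $\theta$ on $C_{-+}(\alpha,\beta)$ triangulated as in Figure~\ref{fig:phib} whose essential triangles carry $f_{\beta,\alpha}$ and $f_{\beta\alpha\beta^{-1},\beta}$ yields $\varphi_\beta(l_\alpha)=g_1(\beta,\alpha)^{-1}g_1(\beta\alpha\beta^{-1},\beta)\,l_{\beta\alpha\beta^{-1}}$. For the inner product, $\eta_\varepsilon(l_\varepsilon\otimes l_\varepsilon)=\tau^\theta(C_{--}(\varepsilon,\varepsilon))$, and since this annulus maps constantly to $x_0$ the value is $\theta(f_{\varepsilon,\varepsilon})=g_1(\varepsilon,\varepsilon)=1$.

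These are precisely the defining formulas of $V(g_1,g_2)$, so the assignment sending the canonical generator of each $V^\theta_{(S^1,u_\alpha)}$ to $l_\alpha$ is the desired isomorphism. Equivalently, since $\eta_\varepsilon(1_V\otimes1_V)=1$, one may invoke Lemma~\ref{example+} to write $V^\Omega(\tau^\theta)\cong V(\widehat g_1,\widehat g_2)$ for some $\kappa$-pair and then identify $(\widehat g_1,\widehat g_2)$ with $(g_1,g_2)$ via the same computations; the trace condition requires no separate check, as both of its sides are scalars already matched in the proof of Lemma~\ref{example}.

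The main obstacle is the evaluation of $\theta$ on each surface: one must fix triangulations of $S(\alpha,a)$, $D(\alpha,\beta)$, $C_{-+}(\alpha,\beta)$ and $C_{--}(\varepsilon,\varepsilon)$ compatible with the figures of Section~3 and track the orientation of every triangle so that the signs, and hence the inverses, come out correctly. This is the same geometric bookkeeping as in Section~3, now producing honest scalars in $K^*$ rather than abstract identities; once the sign convention for $\tau^\theta$ from \cite{tu} is pinned down, the factors $g_1(\rho,\eta)^{\pm1}$ line up termwise with the definitions in Lemma~\ref{example}.
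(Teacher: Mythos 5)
Your proposal is correct and follows essentially the same route as the paper: identify each $V^{\theta}_{(S^1,u_\alpha)}$ as a line with a canonical generator, evaluate $\tau^{\theta}$ on the standard $X$-surfaces of Section~3 using $\theta(f_{\alpha,\beta})=g_1(\alpha,\beta)$ and $\theta(a)=g_2(a)$, and match the resulting structure constants with the defining formulas of $V(g_1,g_2)$. The paper's own proof is in fact terser---it records only the multiplication and the $A$-action before concluding---so your additional verification of $\varphi_\beta$ and $\eta_\varepsilon$ merely makes the identification more explicit.
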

\begin{proof} Let      $V= \oplus_{\alpha\in G}\, V_\alpha= V^\Omega$
be the   underlying TF-algebra of $(V^{\theta},\tau^{\theta})$. The
definition of $ V^{\theta} $ implies that $V_\alpha=
V^{\theta}_{(S^1,u_{\alpha})} $ is a 1-dimensional $K$-vector space.
This vector space is generated by a vector $p_{\alpha} $ represented
by the map $p:[0,1]\to S^1$
   from
Section~2.2   viewed as a fundamental cycle of $
 S^1 $. Multiplication
 in  $V $ is
   computed by
$$p_{\alpha}\, p_{\beta}=\tau^{\theta}({{D}}(\alpha,\beta ))(p_{\alpha}\otimes p_{\beta})=f^*(\theta)(B)\, p_{\alpha\beta}, $$
 where $\alpha, \beta\in G$,  the map $f:{{D}}(\alpha,\beta)\to X$ is   determined by the structure of an $X$-surface in the
 disk with two  holes
 ${{D}}(\alpha,\beta)$,   and $B$ is a fundamental singular chain in ${{D}}(\alpha,\beta)$
such that $\partial (B)=p_{\alpha\beta}-p_{\alpha}-p_{\beta}$. It is
easy to see (cf.\ \cite{tu2}) that
$$g^*(\theta)(B)=\theta(f_{\alpha,\beta})^{-1}=g_1(\alpha,
\beta)^{-1}.$$ So,
 $p_{\alpha}\, p_{\beta}  =g_1(\alpha, \beta)^{-1}\,
 p_{\alpha\beta}$.
Similarly, for all $a\in A=\pi_2(X)$,
$$a \, p_{\varepsilon}=\tau^{\theta}({S}(\varepsilon,a))(p_{\varepsilon})=\theta( {a})\, p_{\varepsilon}=g_2( {a})\, p_{\varepsilon}.$$
where $\theta( {a})\in K^*$ is the evaluation  of   $\theta$ on $a
$.   Therefore $V =V(g_1,g_2)$.
\end{proof}




\bibliographystyle{amsalpha}

\end{document}